\definecolor{blue}{rgb}{0,0,0.8}
\definecolor{red}{rgb}{0.8,0,0}
\definecolor{darkgreen}{rgb}{0,0.6,0}
\newcommand{\PP}{{\mathcal P}}
\newcommand{\ze}{\zeta}
\newcommand{\zs}{\zeta(s)}
\newcommand{\DD}{{\mathcal D}}
\newcommand{\GG}{{\mathcal G}}
\newcommand{\si}{\sigma}
\newcommand{\G}{{\mathcal G}}
\newcommand{\A}{{\mathcal A}}
\newcommand{\Z}{{\mathcal Z}}
\newcommand{\RR}{{\mathbb R}}
\newcommand{\CC}{{\mathbb C}}
\newcommand{\KK}{{\mathbb K}}
\newcommand{\NN}{{\mathbb N}}
\newcommand{\N}{{\mathcal N}}
\newcommand{\R}{{\mathcal R}}
\newcommand{\al}{{\alpha}}
\newcommand{\ve}{{\varepsilon}}
\newtheorem{theorem}{Theorem}
\newtheorem{lemma}{Lemma}
\newtheorem{definition}{Definition}
\theoremstyle{definition}
\newtheorem{remark}{Remark}
\newcommand{\de}{\delta}
\begin{document}

\title
{Density estimates for the zeros of the \\ Beurling $\zeta$ function in the critical strip\thanks{Supported in part by Hungarian National Foundation for Scientific Research, Grant \# T-72731,
T-049301, K-61908, K-119528 and K-132097 and by the Hungarian-French Scientific and
Technological Governmental Cooperation, Project \# T\'ET-F-10/04 and the Hungarian-German Scientific and Technological Governmental Cooperation, Project \# TEMPUS-DAAD \# 308015.}}

\author{Szil\' ard Gy. R\' ev\' esz}

\date{\it{Dedicated to J\'anos Pintz on the occasion of his 70th anniversary}}

\maketitle

\begin{abstract}
In this paper we prove three results on the density resp. local density and clustering of zeros of the Beurling zeta function $\zs$ close to the one-line $\si:=\Re s=1$. The analysis here brings about some news, sometimes even for the classical case of the Riemann zeta function.

As a complement to known results for the Selberg class, first we prove a Carlson type zero density estimate. Note that density results for the Selberg class rely on use of the functional equation of $\zeta$, not available in the Beurling context. Our result sharpens results of Kahane, who proved an $O(T)$ estimate for zeros lying precisely just on a vertical line $\Re s=a$ in the critical strip.

Next we deduce a variant of a well-known theorem of Tur\'an, extending its range of validity even for rectangles of height only $h=2$.

Finally, we extend a zero clustering result of Ramachandra from the Riemann zeta case. A weaker result -- which, on the other hand, is a strong sharpening of the average result from the classic book of Montgomery -- was worked out by Diamond, Montgomery and Vorhauer.
On our way we show that some obscure technicalities of the Ramachandra paper can be avoided.
\end{abstract}

{\bf MSC 2000 Subject Classification.} Primary 11M41; Secondary 11F66, 11M36, 30B50, 30C15.

{\bf Keywords and phrases.} {\it Beurling zeta function, analytic
continuation, arithmetical semigroups, Beurling prime number
formula, zero of the Beurling zeta function, oscillation of remainder term, density estimates for zeta zeros.}

\medskip
{\bf Author information.} Alfréd Rényi Institute of Mathematics\\
Reáltanoda utca 13-15, 1053 Budapest, Hungary \\
{\tt revesz.szilard@renyi.hu}

\section{Introduction}

\subsection{Beurling's theory of generalized integers and primes.}
Beurling's theory fits well to the study of several mathematical
structures. A vast field of applications of Beurling's theory is
nowadays called \emph{arithmetical semigroups}, which are
described in detail e.g. by Knopfmacher, \cite{Knopf}.

Here $\G$ is a unitary, commutative semigroup, with a countable set
of indecomposable generators, called the \emph{primes} of $\G$ and
denoted usually as $p\in\PP$, (with $\PP\subset \G$ the set of all
primes within $\G$), which freely generate the whole of $\G$: i.e.,
any element $g\in \G$ can be (essentially, i.e. up to order of
terms) uniquely written in the form $g=p_1^{k_1}\cdot \dots \cdot
p_m^{k_m}$: two (essentially) different such expressions are
necessarily different as elements of $\G$, while each element has
its (essentially) own unique prime decomposition.
Moreover, there is a \emph{norm} $|\cdot|~: \G\to \RR_{+}$ so that
the following hold. First, the image of $\G$, $|\G|\subset
\RR_{+}$ is discrete, i.e. any finite interval of $\RR_{+}$ can contain
the norm of only a finite number of elements of $\G$; thus the
function
\begin{equation}\label{Ndef}
{\N}(x):=\# \{g\in \G~:~ |g| \leq x\}
\end{equation}
exists as a finite, nondecreasing, right continuous, nonnegative integer valued
function on $\RR_{+}$.
Second, the norm is multiplicative, i.e. $|g\cdot h| = |g| \cdot
|h|$; it follows that for the unit element $e$ of $\G$ $|e|=1$, and
that all other elements $g \in \G$ have norms strictly larger than 1.

Arithmetical functions can also be defined on
$\G$. We will use in this work the
identically one function $u$, the M\"obius function $\mu$ and the
number of divisors function $d$: for their analogous to the classical case
definition see pages 73-79 in \cite{Knopf}. The generalized von Mangoldt function $\Lambda_{\GG}(g)$, will appear below in \eqref{vonMangoldtLambda}.


In this work we assume the so-called \emph{"Axiom A"} (in its normalized form to $\delta=1$) of
Knopfmacher, see pages 73-79 of his fundamental book \cite{Knopf}.

\begin{definition} It is said that ${\N}$ (or, loosely speaking, $\ze$)
satisfies \emph{Axiom A} -- more precisely, Axiom
$A(\kappa,\theta)$ with the suitable constants $\kappa>0$ and
$0<\theta<1$ -- if we have\footnote{The usual formulation uses the more natural version $\R(x):= \N(x)-\kappa x$. However, our version is more convenient with respect to the initial values at 1, as we here have $\R(1-0)=0$. All respective integrals of the form $\int_X$ will be understood as integrals from $X-0$, and thus we can avoid considering endpoint values in the partial integration formulae involving integration starting form 1. Alternatively, we could have taken also $\N(x):=\# \{g\in \GG, |g|<x\}$ left continuous, and $\R(x):=\N(x)-\begin{cases}\kappa x \qquad &\text{if}~ x>1 \\ 0 &\text{if} ~ x\le 1 \end{cases}$--also with this convention we would have $\R(1-0)=0$ for the remainder term, but this seemed to be less convenient than our above choice.} for the remainder term
$$
\R(x):= \N(x)-\kappa (x-1)
$$
the estimate
\begin{align}\label{Athetacondi}
\left| \R(x) \right|  \leq A x^{\theta} \quad ( x \geq 1 ).
\end{align}
\end{definition}

It is clear that under Axiom A the Beurling zeta function
\begin{equation}\label{zetadef}
\ze(s):=\ze_{\G}(s):=\int_1^{\infty} x^{-s} d\N(x) = \sum_{g\in\G} \frac{1}{|g|^s}
\end{equation}
admits a meromorphic, essentially analytic continuation
$\kappa\frac{1}{s-1}+\int_1^{\infty} x^{-s} d\R(x)$ up to $\Re s >\theta$
with only one, simple pole at 1.

\subsection{Analytic theory of the distribution of Beurling generalized primes.}
The Beurling $\zeta$ function \eqref{zetadef} is expressed by
the generalized von Mangoldt function
\begin{equation}\label{vonMangoldtLambda}
\Lambda (g):=\Lambda_{\G}(g):=\begin{cases} \log|p| \quad \textrm{if}\quad g=p^k,
~ k\in\NN ~~\textrm{with some prime}~~ p\in\G\\
0 \quad \textrm{if}\quad g\in\G ~~\textrm{is not a prime power in} ~~\G
\end{cases},
\end{equation}
as coefficients of the logarithmic derivative of the zeta function
\begin{equation}\label{zetalogder}
-\frac{\zeta'}{\zeta}(s) = \sum_{g\in \G} \frac{\Lambda(g)}{|g|^s}.
\end{equation}

The Beurling theory of generalized primes is mainly concerned with
the analysis of the summatory function
\begin{equation}\label{psidef}
\psi(x):=\psi_{\G}(x):=\sum_{g\in \G,~|g|\leq x} \Lambda (g).
\end{equation}
The generalized PNT (Prime Number Theorem) is the asymptotic equality $\psi(x)\thicksim x$. The remainder term in this equivalence is denoted, as usual,
\begin{equation}\label{Deltadef}
\Delta(x):=\Delta_{\G}(x):=\psi(x)-x.
\end{equation}
In the classical case of prime number distribution, as well as regarding some extensions to primes in arithmetical progressions and distribution of prime ideals in algebraic number fields, the connection between location and distribution of zeta-zeroes and oscillatory behavior in the remainder term of the prime number formula $\psi(x)\thicksim x$ is well understood \cite{Kac1, K-97, K-98, Knapowski, Pintz1, Pintz2, Pintz9, PintzProcStekl, Rev1, Rev2, RevPh, RevAA, Stas1, Stas2, Stas-Wiertelak-1, Stas-Wiertelak-2, Turan1, Turan2}. On the other hand in the generality of Beurling primes and zeta function, investigations so far were focused on mainly four directions. First, better and better, minimal conditions were sought in order to have a Chebyshev type formula $x\ll \psi(x) \ll x$, see e.g. \cite{Vindas12, Vindas13, DZ-13-2, DZ-13-3}. Understandably, as in the classical case, this relation requires only an analysis of the $\zeta$ function of Beurling in, and on the boundary of the convergence halfplane. Second, conditions for the prime number theorem to hold, were sought see e.g. \cite{Beur, K-98, DebruyneVindas-PNT, DSV, DZ-17, Zhang15-IJM, Zhang15-MM}. Again, this relies on the boundary behavior of $\zeta$ on the one-line $\si=1$. Third, rough (as compared to our knowledge in the prime number case) estimates and equivalences were worked out in the analysis of the connection between $\zeta$-zero distribution and error term behavior for $\psi(x)$ see e.g. \cite{H-5}, \cite{H-20}. Fourth, examples were constructed for arithmetical semigroups with very "regular" (such as satisfying the Riemann Hypothesis RH and error estimates $\psi(x)=x+O(x^{1/2+\varepsilon})$) and very "irregular" (such as having no better zero-free regions than \eqref{classicalzerofree} below and no better asymptotic error estimates than \eqref{classicalerrorterm}) behavior and zero- or prime distribution, see e.g. \cite{H-15}, \cite{BrouckeDebruyneVindas}, \cite{DMV}, \cite{H-5}, \cite{Zhang7}. Here we must point out that the above citations are just examples, and are far from being a complete description of the otherwise formidable literature\footnote{E.g. a natural, but somewhat different direction, going back to Beurling himself, is the study of analogous questions in case the assumption of Axiom A is weakened to e.g. an asymptotic condition on $\N(x)$ with a product of $x$ and a sum of powers of $\log x$, or sum of powers of $\log x$ perturbed by almost periodic polynomials in $\log x$, or $\N(x)-cx$ periodic, see \cite{Beur}, \cite{Zhang93}, \cite{H-12}, \cite{RevB}.}. For a throughout analysis of these directions as well as for much more information the reader may consult the monograph \cite{DZ-16}.

The main focus of our study, of which this present paper is a part only, is to establish as precise as possible connections between distribution of the zeros of the Beurling zeta function $\zeta$ on the one hand and order of magnitude estimates or oscillatory properties of $\Delta(x)$ on the other hand.

Apart from generality and applicability to e.g. distribution of
prime ideals in number fields, the interest in these things were
greatly boosted by a construction of Diamond, Montgomery
and Vorhauer \cite{DMV}. They basically showed that under Axiom A
the Riemann hypothesis may still fail; moreover, nothing better
than the most classical zero-free region and error term \cite{V} of
\begin{equation}\label{classicalzerofree}
\zeta(s) \ne 0 \qquad \text{whenever}~~~ s=\sigma+it, ~~ \sigma >
1-\frac{c}{\log t},
\end{equation}
and
\begin{equation}\label{classicalerrorterm}
\psi(x)=x +O(x\exp(-c\sqrt{\log x})
\end{equation}
follows from \eqref{Athetacondi} at least if $\theta>1/2$.

\subsection{Aims and results of the paper.}

The present paper is the second part of a series. In \cite{Rev-MP} we worked out in detail a number of technical lemmas on the behavior of the Beurling zeta function, and arrived at a Riemann-von Mangoldt type formula.
Here we analyze further the distribution of zeroes of the Beurling zeta function in the critical strip $\theta<\Re s \le 1$.

Our aim with this analysis is to lay the ground for the extension to the Beurling case of a number of results of number theory nature. In Section \ref{sec:preview} we will briefly explain what concrete goals we have in mind, but we consider that a lot of further number theory results--like e.g. estimates for primes in short intervals etc.--become also accessible by use of the here presented information on the distribution of the zeroes of the Beurling $\zeta$ function.

We start the present paper with a classical "Carlson type" density result. Theorem \ref{th:density} is somewhat surprising, because we lack a functional equation, essential in the treatment of the Selberg class, where zero density estimates are known to hold \cite{KP}. However, the functional equation is only used in the Selberg class to estimate $\zeta$, and we succeed because similar estimates can be derived directly from our conditions. A predecessor of our result was worked out by Kahane \cite{K-99}, who proved that the number of Beurling zeta zeroes lying on some vertical line $\Re s=\si=a$, where $a>\max(1/2,\theta)$, has finite upper density. That is already a nontrivial fact\footnote{This particular result enabled Kahane to draw deep number theoretical consequences regarding the oscillation (sign changes) of the error term in the prime number formula. Obviously, obtaining a much sharper result -- estimating the total number of zeroes in a full rectangle, not only on one individual vertical line, and with a quantity essentially below the order of $T$ when $a$ is getting close to $1$ -- provides an even stronger foothold for deriving number theoretical consequences.} because the total number of zeroes in the rectangle $[a,1]\times[-iT,iT]$ may grow in the order $T\log T$ for any $a<1/2$.

Second, we present a Turán type local density estimate in case the Beurling zeroes locally somewhat keep off the 1-line. In this we improve upon the local precision allowing heights of rectangles in focus just being constants (instead of the classical $\log\log |t|$).

Finally, the third result is a zero clustering estimate, providing an improvement on the corresponding version of what Diamond, Montgomery and Vorhauer proved and used in \cite{DMV}, which in itself was an improvement over the weaker, averaged result in \cite{Mont}. Our result is a better presented and generalized variant of a classical, but obscurely written and thus seemingly forgotten result of Ramachandra \cite{Ram} for the Riemann zeta case.

The very fact that the proofs of these go through in this generality is somewhat surprising. Moreover, the last two of them contain some refinements even for the Riemann zeta function, so that they bear some novelty, however minor, even in the most classical case.

For deriving the below density theorem we will use the following two additional assumptions, both quite frequent and general, but still forming some restrictions to our general treatment. One very generally used condition is that the norm would actually map to the natural integers. In cases of counting type problems, as well as e.g. for algebraic number fields where certain indices are used as norms (equivalence classes modulo an ideal, e.g.), this is all self-evident.

\begin{definition}[Condition B]\label{condB} We say that
\emph{Condition B} is satisfied, if $|\cdot|:\G\to\NN$, that is,
the norm $|g|$ of any element $g\in\G$ is a natural number.
\end{definition}

As is natural, we will write $\nu\in|\G|$ if there exists $g\in\G$
with $|g|=\nu$. Under Condition B we can introduce the
arithmetical function $G(\nu):=\sum_{g\in\G,~|g|=\nu} 1$, which is
then a super-multiplicative arithmetical function on $\NN$. The next condition is a
kind of "average Ramanujan condition" for the Beurling zeta function.

\begin{definition}[Condition G]\label{condG} We say that
\emph{Condition G} is satisfied, if with a certain $p>1$ we have
for the function
\begin{equation}\label{Fdef}
F_p(X):=\frac1X\sum_{g\in\G ; |g|\leq X} G(|g|)^p = \frac1X
\sum_{\nu \in|\G| ; \nu \leq X} G(\nu)^{1+p}=\frac1X \int_1^X
G^{p}(x)d\N(x)
\end{equation}
the property that
\begin{equation}\label{Gpcondi}
\log F_p(X) = o(\log X) \qquad (X\to \infty),
\end{equation}
that is, for any fixed $\varepsilon >0$
$F_p(X)=O(X^{\varepsilon})$.
\end{definition}

Note that in case $\log G(\nu) = o(\log \nu)$, i.e. when for all
$\ve>0$ we have $G(\nu)=O(\nu^{\ve})$, then Condition G is
automatically satisfied for all $0<p<\infty$. Even this stronger $O(\nu^{\ve})$ order
estimate is proved for many important cases, see e.g. V.2.4. Theorem
and V.2.5. Corollary of \cite{Knopf}. For some discussion of these conditions see 
\S \ref{sec:density}.

We denote the number of zeroes of the Beurling zeta function in $[b,1]\times [-iT,iT]$ as
\begin{equation}\label{NTest}
N(b,T):=\#\{ \rho=\beta+i\gamma~:~ \ze(\rho)=0, \,\beta\geq b,
|\gamma|\leq T \}.
\end{equation}
The main result of the paper is the following classical style density estimate for the distribution of the zeros of the Beurling zeta function.
\begin{theorem}\label{th:density} Assume that $\G$ satisfies besides
Axiom A also Conditions B and G, too. Then for any $\varepsilon>0$
there exists a constant $C=C(\varepsilon,\G)$ such that for all sufficiently large $T$ and
$\alpha>(1+\theta)/2$ we have
\begin{equation}\label{density}
N(\alpha,T)\leq C T^{\frac{6-2\theta}{1-\theta}(1-\alpha)+\ve}.
\end{equation}
\end{theorem}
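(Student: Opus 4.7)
My plan is to adapt the classical Carlson--Montgomery mollified zero-detection strategy to the Beurling setting. With a parameter $Y$ to be chosen later, introduce the Beurling M\"obius mollifier $M_Y(s):=\sum_{g\in\G,\,|g|\le Y}\mu(g)/|g|^s$, so that $\ze(s)M_Y(s)=1+B_Y(s)$, where $B_Y(s)=\sum_{|g|>Y}b_g|g|^{-s}$ is a Dirichlet series supported on $|g|>Y$. At every zero $\rho=\beta+i\gamma$ of $\ze$ with $\alpha\le\beta\le 1$ and $|\gamma|\le T$ we have the identity $B_Y(\rho)=-1$, in particular $1\le|B_Y(\rho)|$. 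Using the Mellin--Barnes identity $e^{-|g|/Z}=\frac{1}{2\pi i}\int_{(c)}\Gamma(w)(Z/|g|)^w\,dw$ and shifting the contour past the simple pole of $\ze$ at $w=1-\rho$, the lower bound $1\le|B_Y(\rho)|$ converts into a pointwise estimate of schematic form
$$1\ll Z^{1-\beta}\,+\,Z^{\sigma_0-\beta}\int_{-\infty}^{\infty}\bigl|\ze(\sigma_0+i(\gamma+v))M_Y(\sigma_0+i(\gamma+v))\bigr|\,|\Gamma(\sigma_0-\beta+iv)|\,dv$$
for any fixed $\sigma_0>\theta$; the residue term $Z^{1-\beta}$ is rendered $\le 1/2$ by a suitable choice $Z=T^\mu$, exploiting $\beta\ge\alpha>(1+\theta)/2$.

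Squaring, applying Cauchy--Schwarz in $v$, and summing over the zeros $\rho$, followed by a Hal\'asz--Montgomery large-sieve type inequality (or Gallagher's lemma applied to a well-spaced subset of zeros) reduces the bound for $N(\alpha,T)$ to an estimate of the mean square
$$J(\sigma_0,T,Y):=\int_{-2T}^{2T}|\ze(\sigma_0+it)M_Y(\sigma_0+it)|^2\,dt.$$
Condition B is needed here so that $\ze M_Y$ can be expanded as an ordinary Dirichlet series indexed by $\NN$, and the Montgomery--Vaughan mean-value theorem for integer-indexed Dirichlet polynomials can be invoked directly, yielding $J\ll (T+Y)\sum_\nu|c_\nu|^2\nu^{-2\sigma_0}$. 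Condition G then furnishes the crucial Ramanujan-type control $\sum_{\nu\le X}|c_\nu|^2\ll X^{1+\varepsilon}$, since $|c_\nu|$ is bounded by a convolution of a divisor-type factor with $G$, both tame on average under Condition G.

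The critical remaining ingredient, normally supplied in the Selberg class by the functional equation, is a convexity-type upper bound $|\ze(\sigma+it)|\ll|t|^{\eta(\sigma)+\varepsilon}$ on the whole critical strip $\theta<\sigma\le 1$. In the Beurling setting this estimate must be extracted directly from the representation $\ze(s)=\kappa/(s-1)+\int_1^\infty x^{-s}\,d\R(x)$ furnished by Axiom A, combined with Phragm\'en--Lindel\"of interpolation between the one-line $\Re s=1+\varepsilon$ and a vertical line just to the right of $\Re s=\theta$; I expect this to be the main technical obstacle, and I expect to draw on the technical preparations carried out in the companion paper \cite{Rev-MP}. Once such a convexity bound is available, inserting it into $J(\sigma_0,T,Y)$, letting $\sigma_0\downarrow\theta$, and optimising $Y=T^\lambda$ and $Z=T^\mu$ against the residue term produces a density estimate of the required shape; the specific exponent $(6-2\theta)/(1-\theta)$ emerges from balancing the $1-\theta$ width of the strip through which the contour is shifted against the $L^2$ and the polynomial-in-$T$ growth cost of $\ze$ on the line $\Re s=\theta+\varepsilon$.
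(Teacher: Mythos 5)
Your overall architecture --- a M\"obius-type mollifier as zero detector, a well-spaced subset of zeros, a large-sieve/mean-value bound for the resulting Dirichlet polynomial, Condition B to pass to an integer-indexed series and Condition G (with divisor moments) to control the coefficients --- is exactly the architecture of the paper's proof, and you have placed Conditions B and G at precisely the points where the paper uses them. Where you diverge is the detection mechanism. You use the smoothed Ingham--Montgomery version: weights $e^{-|g|/Z}$, a Mellin contour shift past the pole at $w=1-\rho$, and a dichotomy whose second branch requires the mean square $J(\sigma_0,T,Y)=\int|\ze M_Y(\sigma_0+it)|^2dt$ on a line near $\theta$. The paper instead uses the fully finite Hal\'asz--Tur\'an/Pintz variant: it takes $H(z)=\sum_{|g|<Y}a(g)|g|^{-z}$ with $a=\mu*u$ truncated at $T$ inside and at $Y$ outside, and shows \emph{pointwise} that $|H(\rho)|<1/2$ at every detected zero by writing $H(\rho)=\sum_{|h|\le T}\mu(h)|h|^{-\rho}\ze_{Y/|h|}(\rho)$ and using $\ze(\rho)=0$ together with the explicit Axiom-A estimates for the partial sums $\ze_X(s)$ (Lemma \ref{l:zetaxs}). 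This eliminates the contour shift, the residue term, and the whole ``class 2'' branch; only the large sieve for a genuinely finite Dirichlet polynomial remains. The price is that $Y$ must be taken as large as $Y_0\asymp T^{(3-\theta)/(1-\theta)}$, which is where the exponent $\frac{6-2\theta}{1-\theta}(1-\alpha)$ comes from.

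Two points in your plan need repair. First, the step you flag as the main obstacle is not the obstacle: the bound $|\ze(s)-\kappa/(s-1)|\le A|s|/(\sigma-\theta)$ for all $\sigma>\theta$ falls out of a single partial integration of $\int_1^\infty x^{-s}\,d\R(x)$ under Axiom A (Lemma \ref{l:zkiss}); no Phragm\'en--Lindel\"of interpolation is needed, and no better-than-linear growth in $|t|$ is available anyway. The genuine gap is your treatment of $J(\sigma_0,T,Y)$: on a line with $\theta<\sigma_0<1$ the product $\ze M_Y$ is \emph{not} a Dirichlet polynomial (the series for $\ze$ diverges there), so the Montgomery--Vaughan mean-value theorem cannot be invoked ``directly'' as you claim. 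You would first have to replace $\ze(\sigma_0+it)$ by the partial sum $\ze_X(\sigma_0+it)$ with $X$ large enough that the tail $\int_X^\infty x^{-s}d\R(x)\ll|s|X^{\theta-\sigma_0}/(\sigma_0-\theta)$ is $O(1)$, i.e. $X\approx T^{1/(\sigma_0-\theta)}$ --- at which point you are handling exactly the long finite polynomials that the paper's method works with from the outset, and you cannot let $\sigma_0\downarrow\theta$ since every estimate carries a factor $1/(\sigma_0-\theta)$. Second, the assertion $B_Y(\rho)=-1$ is not literally meaningful, since $B_Y$ converges only for $\Re s>1$; your subsequent smoothing is the correct fix, but then the lower bound at a zero is only $\gg 1$ for one of the two branches of the dichotomy, and the residue term carries a factor $\Gamma(1-\rho)$ rather than the bare $Z^{1-\beta}$ you wrote. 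None of this is unfixable, but as written the plan does not yet close, and closing it along your route essentially reproduces the paper's finite-truncation argument.
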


Note that according to the quite standard Lemma \ref{l:Littlewood} below, $N(\alpha,T)=O(T^{1+\ve})$ for $\alpha>\theta$, always. Thus the statement is nontrivial only if $\alpha$ is close to $1$, more precisely when $\alpha> \frac{5-\theta}{6-2\theta}$.

For the formulation of further two results see Sections \ref{sec:localdensity} and \ref{sec:cluster}. These additional results rely solely on Axiom A, and do not use the additional assumptions of Conditions B and G. However, the density estimate above can still be considered the main result of the paper in view of the wide range of prospective applicability in further studies of number theory nature.

The structure of the paper is as follows. In \cite{Rev-MP}, considered Part I of the series, we described a number of auxiliary results, estimations and formulae for the Beurling zeta function. For the convenience of the reader those also in use here will be presented without proofs in the next section \S \ref{sec:basics}. In \S \ref{sec:density} we prove Theorem \ref{th:density}. In \S \ref{sec:localdensity} the reader finds a generalized (and at the same time somewhat sharpened) version of the Turán type local density estimate mentioned above. Finally, in \S \ref{sec:cluster} we prove a streamlined and generalized version of the above mentioned zero clustering result of Ramachandra.

\section{Lemmata on the Beurling $\zeta$ function}\label{sec:basics}

\subsection{Basic properties of the Beurling $\zeta$.}
The following basic lemmas are just slightly more explicit forms of 4.2.6.
Proposition, 4.2.8. Proposition and 4.2.10. Corollary of
\cite{Knopf}. In \cite{Rev-MP} we elaborated on their proofs for explicit
handling of the arising constants in these estimates.
\begin{lemma}\label{l:zetaxs} Denote the "partial sums" (partial
Laplace transforms) of $\N|_{[1,X]}$ as $\ze_X$ for arbitrary $X\geq 1$:
\begin{equation}\label{zexdef}
\ze_X(s):=\int_1^X x^{-s} d\N(x).
\end{equation}
Then $\ze_X(s)$ is an entire function and for $\sigma:=\Re s
>\theta$ it admits
\begin{equation}\label{zxrewritten}
\ze_X(s)=
\begin{cases} \ze(s)-\frac{\kappa X^{1-s}}{s-1}-\int_X^\infty x^{-s}d\R(x)
& \textrm{for all} ~~~s\ne 1,  \\
\frac{\kappa }{s-1}-\frac{\kappa X^{1-s}}{s-1}+\int_1^X
x^{-s}d\R(x)
& \textrm{for all} ~~~s\ne 1, \\
\kappa \log X + \int_1^X \frac{d\R(x)}{x} & \textrm{for}~~~ s=1,
\end{cases}
\end{equation}
together with the estimate
\begin{equation}\label{zxesti}
\left|\ze_X(s) \right| \leq \ze_X(\sigma) \leq
\begin{cases} \min \left( \frac{\kappa X^{1-\sigma}}{1-\sigma}
+ \frac{A}{\sigma-\theta},~ \kappa X^{1-\sigma}\log X +
\frac{A}{\sigma-\theta}\right) &\textrm{if} \quad
\theta<\sigma < 1,
\\ \kappa \log X + \frac{A}{1-\theta}& \textrm{if} \qquad \sigma =1,
\\ \min\left( \frac{\sigma (A+\kappa)}{\sigma-1},~{\kappa}\log X + \frac{\sigma A}{\sigma-\theta}
\right) &\textrm{if}\quad
\sigma>1.
\end{cases}
\end{equation}
Moreover, the above remainder terms can be bounded as follows.
\begin{equation}\label{zxrlarge}
\left| \int_X^\infty x^{-s}d\R(x) \right| \leq A
\frac{|s|+\sigma-\theta}{\sigma-\theta} X^{\theta-\sigma},
\end{equation}
and
\begin{equation}\label{zxrlow}
\left| \int_1^X x^{-s}d\R(x) \right| \leq A \left(
|s|\frac{1-X^{\theta-\sigma}}{\sigma-\theta} +
X^{\theta-\sigma}\right) \leq A \min \left(
\frac{|s|}{\sigma-\theta},~|s| \log X + X^{\theta-\sigma} \right).
\end{equation}
\end{lemma}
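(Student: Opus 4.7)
The plan is to verify the four assertions in turn, all hinging on the Axiom A splitting $d\N(x) = \kappa\,dx + d\R(x)$. Entirety of $\ze_X(s)$ is immediate, since $|\G|\cap[1,X]$ is finite and so $\ze_X$ is just the finite exponential sum $\sum_{|g|\le X}|g|^{-s}$, each term entire. The three representations in \eqref{zxrewritten} then follow by plugging the splitting into the defining integral and computing $\kappa\int_1^X x^{-s}dx$ explicitly, which produces the second and third formulae at once; the first formula, valid for $\sigma>\theta$, is obtained by substituting $\kappa/(s-1) = \ze(s) - \int_1^\infty x^{-s}d\R(x)$ from the defining formula of the meromorphic continuation.

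For the pointwise estimate \eqref{zxesti}, the bound $|\ze_X(s)|\le\ze_X(\sigma)$ is the triangle inequality under the integral, so the real work is to produce each explicit majorant of $\ze_X(\sigma)$ by choosing the right representation in each range of $\sigma$. For $\theta<\sigma<1$ I would use the second identity of \eqref{zxrewritten} and estimate the $d\R$-remainder by \eqref{zxrlow}; the alternative $X^{1-\sigma}\log X$ shape emerges from the elementary inequality $e^u-1\le u e^u$ applied with $u=(1-\sigma)\log X$. For $\sigma=1$ the third identity yields the estimate immediately, using $|\R(x)|/x\le A x^{\theta-1}$ to bound the residual integral by $A/(1-\theta)$. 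For $\sigma>1$ the bound $\sigma(A+\kappa)/(\sigma-1)$ is most quickly obtained by integrating $\ze_X(\sigma)\le \sigma\int_1^\infty x^{-\sigma-1}\N(x)dx$ by parts and applying the crude bound $\N(x)\le\kappa x + Ax^\theta$; the alternative bound involving $\log X$ follows from the second identity plus \eqref{zxrlow}.

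The remainder estimates \eqref{zxrlarge} and \eqref{zxrlow} are both produced by Stieltjes integration by parts. For the tail integral the boundary term at infinity vanishes since $\sigma>\theta$, leaving $\int_X^\infty x^{-s}d\R(x) = -X^{-s}\R(X)+s\int_X^\infty x^{-s-1}\R(x)dx$, which by Axiom A is bounded by $AX^{\theta-\sigma}+|s|AX^{\theta-\sigma}/(\sigma-\theta)$, matching the claim. The analogous computation for $\int_1^X x^{-s}d\R(x)$ (using the footnote convention $\R(1-0)=0$ to kill any boundary contribution at $x=1$) gives $AX^{\theta-\sigma}+|s|A(1-X^{\theta-\sigma})/(\sigma-\theta)$; the first entry of the minimum then follows from $|s|\ge\sigma>\sigma-\theta$, and the second from the elementary $1-X^{\theta-\sigma}\le(\sigma-\theta)\log X$ (a rewriting of $1-e^{-u}\le u$).

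I expect no genuinely difficult step: the whole result is a bookkeeping exercise once the decomposition $d\N=\kappa\,dx+d\R$ and the two integration-by-parts identities are fixed. The main care required is tracking which of several competing estimates yields each stated minimum, and handling the Stieltjes endpoint conventions consistently, in particular the left-limit behavior at $x=1$ highlighted in the footnote; those are the only places where a slip would produce spurious boundary factors.
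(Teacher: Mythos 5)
Your proposal is correct and follows exactly the route the paper (via its companion \cite{Rev-MP}) takes: split $d\N=\kappa\,dx+d\R$, compute the $\kappa$-part explicitly, substitute the meromorphic continuation $\ze(s)=\frac{\kappa}{s-1}+\int_1^\infty x^{-s}d\R(x)$ for the first identity, and bound the remainder integrals by Stieltjes integration by parts together with $|\R(x)|\le Ax^\theta$ and the elementary inequalities $1-e^{-u}\le u\le e^u-1\le ue^u$. All the bookkeeping you sketch (including the endpoint convention $\R(1-0)=0$ and the verification that $|s|\ge\sigma>\sigma-\theta$ yields the first entry of the minimum in \eqref{zxrlow}) checks out.
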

\begin{lemma}\label{l:zkiss} We have
\begin{equation}\label{zsgeneral}
\left|\zs-\frac{\kappa}{s-1}\right|\leq \frac{A|s|}{\sigma-\theta}
\qquad \qquad\qquad (\theta <\sigma ,~ t\in\RR,~~ s\ne 1).
\end{equation}
In particular, for large enough values of $t$ it holds
\begin{equation}\label{zsgenlarget}
\left|\zs \right|\leq \sqrt{2} \frac{(A+\kappa)|t|}{\sigma-\theta}
\qquad \qquad\qquad (\theta <\sigma \leq |t|),
\end{equation}
while for small values of $t$ we have
\begin{equation}\label{zssmin1}
|\zs(s-1)-\kappa|\leq \frac{A|s||s-1|}{\sigma-\theta} \leq
\frac{100 A}{\sigma-\theta}\qquad (\theta <\sigma \leq 4,~|t|\leq
9).
\end{equation}
As a consequence, we also have
\begin{equation}\label{polenozero}
\zs\ne 0 \qquad \textrm{for}\qquad  |s-1| \leq
\frac{\kappa(1-\theta)}{A+\kappa}.
\end{equation}
\end{lemma}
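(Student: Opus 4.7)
First I would convert all four assertions into bounds on the single Laplace–Stieltjes integral $I(s):=\int_1^\infty x^{-s}\,d\R(x)$. Splitting the defining integral $\ze(s)=\int_1^\infty x^{-s}\,d\N(x)$ via the decomposition $d\N=\kappa\,dx+d\R$ for $\sigma>1$, and extending the resulting identity analytically into the half-plane $\sigma>\theta$, gives the key formula
$$\ze(s)-\frac{\kappa}{s-1}\;=\;I(s)\qquad(\sigma>\theta,\ s\ne 1),$$
equivalent to what one obtains from \eqref{zxrewritten} by equating the two expressions for $\ze_X(s)$ and letting $X\to\infty$. The rest of the proof consists of bounding $I(s)$ and unwinding this identity.

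The basic bound \eqref{zsgeneral} is immediate from \eqref{zxrlow}: for $\sigma>\theta$ the term $X^{\theta-\sigma}$ vanishes as $X\to\infty$, leaving $|I(s)|\le A|s|/(\sigma-\theta)$. Given this, \eqref{zsgenlarget} and \eqref{zssmin1} are purely arithmetic. For \eqref{zsgenlarget} I would apply the triangle inequality to $|\ze(s)|\le A|s|/(\sigma-\theta)+\kappa/|s-1|$; the hypothesis $\sigma\le|t|$ gives $|s|\le\sqrt 2\,|t|$, and for $|t|$ large enough (say $|t|\ge 1$) one has $|s-1|\ge|t|$, so the pole contribution $\kappa/|t|$ is absorbed into the main term $\sqrt 2(A+\kappa)|t|/(\sigma-\theta)$. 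For \eqref{zssmin1} I would multiply \eqref{zsgeneral} by $|s-1|$; on the box $\theta<\sigma\le 4$, $|t|\le 9$ one has $|s|\le\sqrt{97}$ and $|s-1|\le\sqrt{90}$, whence $|s||s-1|<100$, giving the stated $100A/(\sigma-\theta)$.

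For the zero-free disk \eqref{polenozero} I would argue by contradiction: if $\ze(s)=0$ at some $s\ne 1$ with $|s-1|\le r:=\kappa(1-\theta)/(A+\kappa)$, then \eqref{zsgeneral} forces $\kappa(\sigma-\theta)\le A|s||s-1|$. Parametrizing $s-1=\rho e^{i\phi}$ with $\rho\le r$ and using the exact identities $\sigma-\theta=1-\theta+\rho\cos\phi$ and $|s|^2=1+2\rho\cos\phi+\rho^2$, the squared form of this inequality reduces, at the extremal radius $\rho=r$, to an algebraic relation equivalent to $\theta\ge 1$, contradicting $\theta<1$ from Axiom~A. The delicate point I anticipate is that the bound is genuinely sharp in the radius $r$, so the lossy triangle substitutions $|s|\le 1+r$ and $\sigma-\theta\ge 1-\theta-r$ are insufficient; one must retain the exact geometric link between $|s|$ and $\sigma$ throughout the computation, which is where the hypothesis $\theta<1$ is used in an essential way.
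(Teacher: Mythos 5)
Your proposal is correct and follows essentially the route the paper intends (the lemma is quoted from \cite{Rev-MP} without proof here, but the truncated formulas \eqref{zxrewritten} and \eqref{zxrlow} of Lemma \ref{l:zetaxs} are precisely the $X\to\infty$ limit of the machinery you use): decompose $d\N=\kappa\,dx+d\R$, bound $\int_1^\infty x^{-s}\,d\R(x)$ by partial integration via Axiom A to get \eqref{zsgeneral}, and deduce the remaining estimates arithmetically. Your observation that \eqref{polenozero} cannot be obtained from the crude bounds $|s|\le 1+r$, $\sigma-\theta\ge 1-\theta-r$ but needs the exact relations $\sigma-\theta=1-\theta+\rho\cos\phi$ and $|s|^2=1+2\rho\cos\phi+\rho^2$ is the genuinely delicate point, and your reduction at $\rho=r$, $\cos\phi=-1$ to $Ar^2\le 0$, i.e.\ $\theta\ge 1$, checks out --- one need only add the routine monotonicity verification (in $\cos\phi$ and then in $\rho$) that this is indeed the extremal case over the whole disk.
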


\begin{lemma}\label{l:oneperzeta} We have
\begin{equation}\label{zsintheright}
|\zs| \leq \frac{(A+\kappa)\sigma}{\sigma-1} \qquad (\sigma >1),
\end{equation}
and also
\begin{equation}\label{reciprok}
|\zs| \geq \frac{1}{\ze(\sigma)} >
\frac{\sigma-1}{(A+\kappa)\sigma} \qquad (\sigma >1).
\end{equation}
\end{lemma}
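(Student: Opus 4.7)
The plan is to derive both inequalities directly from the Dirichlet series and Euler product representations, both valid for $\sigma>1$ by absolute convergence.

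For the upper bound \eqref{zsintheright}, the triangle inequality applied to \eqref{zetadef} gives
$$|\zs| \leq \sum_{g\in\G}|g|^{-\sigma}=\ze(\sigma),$$
so it suffices to bound the real quantity $\ze(\sigma)$. This is already packaged in Lemma \ref{l:zetaxs}: the third branch of \eqref{zxesti} yields $\ze_X(\sigma)\leq \sigma(A+\kappa)/(\sigma-1)$ uniformly in $X$, and letting $X\to\infty$ the left hand side tends to $\ze(\sigma)$ by absolute convergence, while the $X$-independent right hand side survives intact. (Alternatively one redoes the one-line computation: partial integration using $\N(x)=\kappa(x-1)+\R(x)$ gives $\ze(\sigma)=\kappa/(\sigma-1)+\sigma\int_1^\infty x^{-\sigma-1}\R(x)dx$, and Axiom A controls the remainder by $A\sigma/(\sigma-\theta)\leq A\sigma/(\sigma-1)$, combining to the asserted majorant.)

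For the lower bound \eqref{reciprok} the ingredient is the Euler product. Free unique factorization of $\G$ together with absolute convergence for $\sigma>1$ give
$$\zs=\prod_{p\in\PP}\left(1-|p|^{-s}\right)^{-1}.$$
Taking reciprocal moduli and combining $|1-z|\leq 1+|z|$ with the elementary inequality $(1+x)(1-x)\leq 1$ valid for $0\leq x<1$ applied to $x=|p|^{-\sigma}$,
$$\frac{1}{|\zs|}=\prod_{p\in\PP}\bigl|1-|p|^{-s}\bigr|\leq \prod_{p\in\PP}\bigl(1+|p|^{-\sigma}\bigr)\leq \prod_{p\in\PP}\bigl(1-|p|^{-\sigma}\bigr)^{-1}=\ze(\sigma).$$
This yields $|\zs|\geq 1/\ze(\sigma)$. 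Substituting the upper bound for $\ze(\sigma)$ just proved gives the final $|\zs|\geq (\sigma-1)/[(A+\kappa)\sigma]$.

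There is essentially no obstacle here: each half reduces, in a few lines, either to the already established estimate of Lemma \ref{l:zetaxs} or to the standard Euler product argument, which carries over verbatim from the classical case because the Beurling setting postulates exactly the free unique factorization needed to form the Euler product. The only minor technical point to be mindful of is legitimizing the passage $\ze_X(\sigma)\to\ze(\sigma)$ as $X\to\infty$, which is immediate from absolute convergence of the defining Dirichlet series in the halfplane $\sigma>1$.
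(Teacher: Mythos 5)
Your proof is correct and follows essentially the same route as the source (the paper defers this lemma to \cite{Knopf} and \cite{Rev-MP}): the upper bound via $|\zs|\le\ze(\sigma)$ combined with the continuation formula $\ze(\sigma)=\frac{\kappa}{\sigma-1}+\int_1^\infty x^{-\sigma}d\R(x)$ and Axiom A, and the lower bound via the Euler product, equivalently $1/\zs=\sum_g \mu(g)|g|^{-s}$, giving $|1/\zs|\le\ze(\sigma)$. The only cosmetic point is the strictness of the final inequality, which follows at once since $\theta<1<\sigma$ makes both intermediate bounds $\kappa/(\sigma-1)<\kappa\sigma/(\sigma-1)$ and $A\sigma/(\sigma-\theta)<A\sigma/(\sigma-1)$ strict.
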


\subsection{Estimates for the number of zeros of  $\zeta$}\label{sec:zeros}

\begin{lemma}\label{l:Jensen} Let $\theta<b<1$ and consider
the rectangle $H:=[b,1]\times [i(T-h),i(T+h)]$, where $h:=\frac{\sqrt{7}}{3}
\sqrt{(b-\theta)(1-\theta)}$ and $|T| \ge e^{5/4}+\sqrt{3}\approx 5.222...$ is arbitrary.

Then the number $n(H)$ of zeta-zeros in the rectangle $H$ satisfy
\begin{align}\label{zerosinH}
n(H) & \leq \frac{1-\theta}{b-\theta}\left(0.654 \log|T| + \log\log |T| + 6\log(A+\kappa) + 6\log\frac1{1-\theta} +12.5 \right)
\notag \\ &\leq \frac{1-\theta}{b-\theta}\left(\log|T| + 6\log(A+\kappa) + 6\log\frac1{1-\theta} +12.5 \right).
\end{align}

Moreover, if $|T|\le 5.23$, then we have analogously the $\log|T|$-free estimate
\begin{equation}\label{zerosinH-smallt}
n(H) \leq \frac{1-\theta}{b-\theta}\left(6\log(A+\kappa) + 6\log\frac1{1-\theta}+14\right).
\end{equation}
\end{lemma}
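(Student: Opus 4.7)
The plan is to apply Jensen's formula to $\zeta(s)$ on a disk enclosing the rectangle $H$, exploiting the lower bound \eqref{reciprok} for $|\zeta|$ slightly to the right of the $1$-line and the growth bound \eqref{zsgenlarget} on the bounding circle.

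I choose the center $s_0 := \sigma_0 + iT$ with $\sigma_0 > 1$, the inner radius $r_1 := \sqrt{(\sigma_0 - b)^2 + h^2}$ (the distance from $s_0$ to the farthest corner of $H$, which guarantees $H \subset \overline{D(s_0,r_1)}$), and the outer radius $R := \sigma_0 - \theta$, making the disk tangent to the line $\Re s = \theta$, beyond which \eqref{zsgenlarget} ceases to be applicable. A direct computation gives
$$R^2 - r_1^2 = (b-\theta)\left[2\sigma_0 - b - \theta - \tfrac{7}{9}(1-\theta)\right],$$
which is positive uniformly in $b \in (\theta,1)$; the specific value $\sqrt 7/3$ in the definition of $h$ is precisely calibrated to keep $r_1 < R$ with adequate margin. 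The hypothesis $|T| \ge e^{5/4} + \sqrt 3$ jointly guarantees (i) that the pole at $s = 1$ lies outside $D(s_0, R)$ (accounting for the $\sqrt 3$) and (ii) that $\sigma \le |t|$ holds across the whole boundary circle (accounting for the $e^{5/4}$), so that \eqref{zsgenlarget} applies uniformly there.

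Jensen's inequality then yields
$$n(H)\,\log\!\frac{R}{r_1} \le \frac{1}{2\pi}\int_0^{2\pi}\log\!\bigl|\zeta(s_0 + Re^{i\phi})\bigr|\,d\phi - \log|\zeta(s_0)|.$$
On the circle I use \eqref{zsgenlarget} in the form $\log|\zeta(s)| \le \log\bigl(\sqrt 2(A+\kappa)\bigr) + \log|t| - \log(\sigma - \theta)$, and evaluate the two averages by the classical identity $\tfrac{1}{2\pi}\int_0^{2\pi}\log(a + b\cos\phi)\,d\phi = \log\bigl((a + \sqrt{a^2 - b^2})/2\bigr)$ (valid for $a > |b|$; extended to $a = |b|$ by a limiting argument to handle $R = \sigma_0 - \theta$). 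The lower bound $|\zeta(s_0)| \ge (\sigma_0 - 1)/((A+\kappa)\sigma_0)$ from \eqref{reciprok} handles the subtracted term; taking $\sigma_0 = 1 + 1/\log|T|$ makes the contribution of $-\log|\zeta(s_0)|$ exactly of size $\log\log|T| + \log(A+\kappa) + O(1)$. Assembling the pieces produces an inequality of the form
$$n(H) \le \frac{1}{\log(R/r_1)}\Bigl(\log|T| + \log\log|T| + O(\log(A+\kappa)) + O(\log\tfrac{1}{1-\theta}) + O(1)\Bigr),$$
and the explicit computation of $1/\log(R/r_1) \le C(1-\theta)/(b-\theta)$ (using the specific $h$) delivers the factor $(1-\theta)/(b-\theta)$ in \eqref{zerosinH}. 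For the $\log|T|$-free estimate \eqref{zerosinH-smallt}, the same Jensen skeleton is used, but in the regime $|T| \le 5.23$ the condition $\sigma \le |t|$ of \eqref{zsgenlarget} cannot be enforced on the whole boundary circle; instead I invoke \eqref{zsgeneral}, $|\zeta(s)| \le A|s|/(\sigma - \theta) + \kappa/|s-1|$, whose RHS on the compact bounding circle is controlled by an absolute constant, removing the $\log|T|$ term entirely.

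The main obstacle I anticipate is the quantitative bookkeeping required to obtain the sharp coefficient $0.654$ in front of $\log|T|$. A naive Jensen with the crude input \eqref{zsgenlarget} already gives a bound of the claimed shape but with a larger constant; sharpening it to $0.654$ likely demands a combination of (a) a careful optimization of $\sigma_0$ (the geometrically optimal choice $\sigma_0 = b + (7/9)(1-\theta)$ maximizes $R/r_1$, considerably tightening the factor), and (b) replacing the worst-case bound \eqref{zsgenlarget} by the Phragmen--Lindel\"of convexity estimate $|\zeta(\sigma + it)| \ll |t|^{(1-\sigma)/(1-\theta)+\varepsilon}$ (derivable from \eqref{zsintheright} and \eqref{zsgenlarget} by a standard $\log\zeta$ argument), which shrinks the boundary average of $\log|\zeta|$ by concentrating its contribution on the portion where $\sigma < 1$.
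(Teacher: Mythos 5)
Your plan follows essentially the same route as the paper's proof (which is deferred to \cite{Rev-MP} and signposted here by Remark \ref{r:alsoindisk}): Jensen's formula on a disc containing $H$ whose outer circle is tangent to $\Re s=\theta$, with \eqref{reciprok} controlling the centre value and an upper bound for $|\zeta|$ on the boundary; moreover your refinement (b) — replacing the crude \eqref{zsgenlarget} by the convexity estimate $|\zeta(\sigma+it)|\ll |t|^{(1-\sigma)/(1-\theta)}\log|t|$ — is precisely what yields the constant $0.654=\tfrac{3\sqrt3}{\pi}-1$ (the average of $\max\bigl(0,\tfrac{1-\sigma}{1-\theta}\bigr)$ over the circle of radius $2(1-\theta)$ about $2-\theta+iT$, multiplied by the limiting value $3$ of $(b-\theta)/\bigl((1-\theta)\log(R/r)\bigr)$), with the $\log\log|T|$ term coming from the $\log|t|$ factor in that convexity bound rather than from a centre at $1+1/\log|T|$. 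The only genuine divergence is your $T$-dependent centre: Remark \ref{r:alsoindisk} indicates the paper centres at the fixed point $p+iT=2-\theta+iT$ with inner radius $p-q=2(1-\theta)-\tfrac23(b-\theta)$, so that $-\log|\zeta(p+iT)|\le\log(A+\kappa)+\log\tfrac{2-\theta}{1-\theta}$ stays free of $\log\log|T|$ and the disc version of the count stated in that remark comes for free.
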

\begin{remark}\label{r:alsoindisk} In fact, this estimate includes also the total number $N$ of zeroes in the disc $\mathcal{D}_r:=\{s~:~|s-(p+iT)|\leq r:=p-q\}$, where $p:=1+(1-\theta)$ and
$q:=\theta+\frac23(b-\theta)$ are parameters introduced in its proof, see \cite{Rev-MP}.
\end{remark}

%
%

\begin{lemma}\label{l:Littlewood} Let $\theta<b<1$ and consider
any height $T\geq 5$ together with the rectangle $Q:=Q(b;T):=\{
z\in\CC~:~ \Re z\in [b,1],~\Im z\in [-T,T]\}$. Then the number of
zeta-zeros $N(b,T)$ in the rectangle $Q$ satisfy
\begin{equation}\label{zerosinth-corr}
N(b,T)\le \frac{1}{b-\theta}
\left\{\frac{1}{2} T \log T + \left(2 \log(A+\kappa) + \log\frac{1}{b-\theta} + 3 \right)T\right\}.
\end{equation}
\end{lemma}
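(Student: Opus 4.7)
The plan is to apply Littlewood's lemma to $\zeta(s)$ on the rectangle $R = [b', 2]\times[-T, T]$, where $b' \in (\theta, b)$ will be chosen at the end. Because $\zeta$ has a simple pole at $s = 1 \in R$, the formula reads
\begin{equation*}
2\pi \sum_{\rho \in R}(\beta_\rho - b') - 2\pi(1-b') = \int_{-T}^T \log|\zeta(b'+it)|\,dt - \int_{-T}^T \log|\zeta(2+it)|\,dt + \int_{b'}^2 \arg\zeta(\sigma + iT)\,d\sigma - \int_{b'}^2 \arg\zeta(\sigma - iT)\,d\sigma,
\end{equation*}
summed over zeros $\rho = \beta_\rho + i\gamma_\rho$ of $\zeta$ in $R$. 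Since every zero with $\beta_\rho \ge b$ contributes at least $b - b'$ to the LHS, $2\pi(b - b') N(b, T) \le 2\pi\sum_\rho (\beta_\rho - b')$, so $N(b,T)$ is controlled by the right-hand side plus the pole contribution.

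First I would handle the easy integral on $\sigma = 2$: by Lemma \ref{l:oneperzeta}, both $|\zeta(2+it)|$ and its reciprocal are bounded by $2(A+\kappa)$, so the absolute value of this integral is at most $2T\log(2(A+\kappa))$. The dominant contribution is the integral on $\sigma = b'$. Using \eqref{zsgenlarget} of Lemma \ref{l:zkiss} for $|t| \ge b'$, $|\zeta(b'+it)| \le \sqrt{2}(A+\kappa)|t|/(b'-\theta)$. On $|t| < b'$ the integrand has only a locally integrable logarithmic singularity near the pole, controlled by \eqref{zssmin1}. Integrating and estimating yields
\begin{equation*}
\int_{-T}^T \log|\zeta(b'+it)|\,dt \le 2T\log T + 2T\log\frac{\sqrt{2}(A+\kappa)}{b'-\theta} + O_{A,\kappa,\theta}(1).
\end{equation*}

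The two horizontal argument integrals along $t = \pm T$ should contribute only $O(\log T)$ each. The standard route is to realize $\arg \zeta(\sigma \pm iT)$ as a continuous branch along the horizontal segment from $2 \pm iT$ to $b' \pm iT$ and bound its total variation by $\pi$ times $(1 + n_T)$, where $n_T$ counts zeros whose imaginary parts lie within a bounded interval of $T$. Lemma \ref{l:Jensen} (applied to a box of bounded height centered at $T$) gives $n_T = O(\log T)$, so each of these integrals is $O(\log T)$, a term that will be swallowed by the $+3$ in the stated constant.

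Assembling the pieces and dividing by $2\pi(b - b')$,
\begin{equation*}
N(b, T) \le \frac{1}{2\pi(b - b')}\left[2\pi(1 - b') + 2T\log T + 2T\log\frac{\sqrt{2}(A+\kappa)}{b'-\theta} + 2T\log(2(A+\kappa)) + O(\log T)\right].
\end{equation*}
Choosing $b' = \theta + (b - \theta)/3$, so that $b - b' = 2(b-\theta)/3$ and $b' - \theta = (b-\theta)/3$, the coefficient of $T\log T$ becomes $3/(2\pi(b-\theta)) < 1/(2(b-\theta))$, and the remaining terms collect into $(2\log(A+\kappa) + \log\frac{1}{b-\theta} + 3)\frac{T}{b-\theta}$ after some rounding of constants. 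The main technical nuisance is the argument-integral step, which requires invoking Lemma \ref{l:Jensen}; the vertical integrals are routine once the estimates from \S\ref{sec:basics} are in hand.
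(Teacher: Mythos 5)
Your overall strategy -- Littlewood's lemma on a rectangle $[b',2]\times[-T,T]$ with the pole at $s=1$ counted negatively, the $\sigma=2$ edge controlled by Lemma \ref{l:oneperzeta}, the $\sigma=b'$ edge controlled by \eqref{zsgenlarget} and \eqref{zssmin1}, and an intermediate abscissa $b'$ with $b-b'\asymp b-\theta$ -- is certainly the intended one (the paper states this lemma without proof, importing it from \cite{Rev-MP}, but its very label and the structure of the constants in \eqref{zerosinth-corr} point to exactly this computation). The vertical-edge estimates and the bookkeeping of the main term $\tfrac{3}{2\pi(b-\theta)}\cdot 2T\log T<\tfrac{1}{2(b-\theta)}T\log T$ are correct.

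The gap is in the horizontal (argument) integrals, in two respects. First, the principle you invoke -- that the total variation of $\arg\zeta$ along $[b'+iT,2+iT]$ is at most $\pi(1+n_T)$ with $n_T$ the number of $\zeta$-zeros near height $T$ -- is not valid as stated: the argument of an analytic function can vary arbitrarily much on a segment containing no zeros at all. You need either Backlund's device (bounding the number of sign changes of $\Re\zeta$ on the segment by Jensen's inequality applied to the auxiliary function $\tfrac12\bigl(\zeta(s+iT)+\overline{\zeta(\bar s+iT)}\bigr)$, which is \emph{not} what Lemma \ref{l:Jensen} counts), or else the representation of $\zeta'/\zeta$ as a sum over nearby zeros plus an explicit error, i.e.\ Lemma \ref{l:borcar}, integrated in $\sigma$. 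Second, and more importantly, either route yields a bound of size $O\bigl(\frac{1}{b-\theta}\log T\bigr)$ for each horizontal integral -- the factor $\frac{1}{b-\theta}$ is forced because all disks must stay inside $\Re s>\theta$ -- not the $(b,\theta)$-free $O(\log T)$ you assert. After dividing by $2\pi(b-b')\asymp b-\theta$ this contributes $O\bigl(\frac{\log T}{(b-\theta)^2}\bigr)$ to $N(b,T)$, and the claim that this is ``swallowed by the $+3$'', i.e.\ by $\frac{3T}{b-\theta}$, amounts to $\log T\lesssim T(b-\theta)$, which fails uniformly as $b\downarrow\theta$ for fixed $T$. So your sketch establishes \eqref{zerosinth-corr} only in the regime $b-\theta\gg(\log T)/T$; to get the stated uniform inequality you must either track the argument-integral constants explicitly and show they fit, or restrict the range of $b$ -- this is precisely where the real work of the proof lies, and it cannot be dismissed as routine.
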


\begin{lemma}\label{c:zerosinrange}
Let $\theta<b<1$ and consider any heights $T>R\geq 5 $ together
with the rectangle $Q:=Q(b;R,T):=\{ z\in\CC~:~ \Re z\in [b,1],~\Im
z\in (R,T]\}$.

Then the number of zeta-zeros $N(b;R,T)$ in the rectangle $Q$ satisfies\footnote{Here and below in \eqref{zerosbetweenone} a corrected formulation is presented. In the original calculation of \cite{Rev-MP}, when collecting terms in the end of the proof of (3.15), the term $8\pi\log T$ heading the third line of the long displayed formula occupying lines 13-17 of page 220 of [37], was erroneously neglected in the final count of the last line of this displayed sequence of inequalities. Following up the consequent changes, the resulting concrete estimations are the ones written here.}
\begin{equation}\label{zerosbetween}
N(b;R,T) \leq\frac{1}{b-\theta} \left\{ \frac{4}{3\pi} (T-R) \left(\log\left(\frac{11.4 (A+\kappa)^2}{b-\theta}{T}\right)\right)  + \frac{16}{3}  \log\left(\frac{60 (A+\kappa)^2}{b-\theta}{T}\right)\right\}.
\end{equation}

In particular, for the zeroes between $T-1$ and $T+1$ we have for
$T\geq 6$
\begin{align}\label{zerosbetweenone}
N(b;T-1,T+1) \leq \frac{1}{(b-\theta)} \left\{6.2 \log T +
6.2 \log\left( \frac{(A+\kappa)^2}{b-\theta}\right) + 24 \right\}.
\end{align}
\end{lemma}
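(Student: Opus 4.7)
The plan is to apply Littlewood's lemma to the rectangle $\mathcal{R}=[\alpha_0,\sigma_1]\times[iR,iT]$ with $\alpha_0$ chosen slightly to the left of $b$ (say $\alpha_0=\theta+c(b-\theta)$ for a suitable $c\in(0,1)$) and $\sigma_1$ slightly to the right of the $1$-line (say $\sigma_1=1+(1-\theta)$, matching the point $p$ appearing in the disc setup of Lemma \ref{l:Jensen} and Remark \ref{r:alsoindisk}). Since $R\geq 5>0$ the pole of $\ze$ at $s=1$ lies outside $\mathcal{R}$, so Littlewood applies to $\ze$ directly and gives
\[
2\pi\sum_{\rho\in\mathcal{R}}(\Re\rho-\alpha_0)
=\int_R^T\log|\ze(\alpha_0+it)|\,dt-\int_R^T\log|\ze(\sigma_1+it)|\,dt
+\int_{\alpha_0}^{\sigma_1}\arg\ze(\si+iT)\,d\si-\int_{\alpha_0}^{\sigma_1}\arg\ze(\si+iR)\,d\si.
\]
Each zero contributing to $N(b;R,T)$ has $\Re\rho-\alpha_0\geq b-\alpha_0>0$, so
\[
(b-\alpha_0)\,N(b;R,T)\leq \sum_{\rho\in\mathcal{R}}(\Re\rho-\alpha_0),
\]
and it remains to estimate the four boundary terms and divide by $2\pi(b-\alpha_0)$.

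For the left vertical integral, \eqref{zsgenlarget} gives $|\ze(\alpha_0+it)|\leq\sqrt{2}(A+\kappa)|t|/(\alpha_0-\theta)$ for $|t|\geq\alpha_0$, which is automatic on $[R,T]$ as $R\geq 5$; integrating produces the main contribution $(T-R)\log\bigl(\sqrt{2}(A+\kappa)T/(\alpha_0-\theta)\bigr)$. For the right vertical integral, Lemma \ref{l:oneperzeta} yields $|\ze(\sigma_1+it)|\geq(\sigma_1-1)/((A+\kappa)\sigma_1)$, so $-\log|\ze(\sigma_1+it)|$ is bounded by a constant depending only on $\theta$ and contributes only a linear $O(T-R)$ term. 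For the two horizontal $\arg$ integrals one needs the standard Backlund-type pointwise estimate $|\arg\ze(\si+iU)|=O\bigl(\log U/(b-\theta)\bigr)$ uniformly for $\si\in[\alpha_0,\sigma_1]$ and $U\in\{R,T\}$, obtained by computing the increment of $\Im\log\ze$ along a horizontal path from the line $\Re s=\sigma_1$ (where $\arg\ze$ is bounded by a constant) back to $\si$, quantifying the possible jumps of $\arg\ze$ across zeros of $\ze$ on this path by the local zero count from Lemma \ref{l:Jensen} (or equivalently Remark \ref{r:alsoindisk}), and controlling the smooth variation between jumps by a Jensen / Borel--Carath\'eodory step on the disc $\mathcal{D}_r$.

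Combining the four estimates and dividing by $2\pi(b-\alpha_0)$ yields an inequality of the schematic form
\[
N(b;R,T)\leq\frac{1}{2\pi(b-\alpha_0)}\left\{(T-R)\log\frac{C(A+\kappa)T}{\alpha_0-\theta}+C'\log T\right\},
\]
and a judicious choice of $c$ (fixing both $b-\alpha_0$ and $\alpha_0-\theta$ to be constant multiples of $b-\theta$) then produces the claimed coefficients $\tfrac{4}{3\pi(b-\theta)}$ for the $(T-R)\log T$ term and $\tfrac{16}{3(b-\theta)}$ for the additive $\log T$ term, with the constants inside the logarithms absorbing the factors $\sqrt{2}(A+\kappa)$, $(\sigma_1-1)^{-1}$ and the bounds on $\arg\ze$. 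The main obstacle is clearly the $\arg\ze$ estimate on the two horizontal edges: $\arg\ze$ is only piecewise continuous along $\Im s=U$ and can jump by $\pi$ at each zero it crosses, so a quantitative control requires Lemma \ref{l:Jensen}'s $\tfrac{1-\theta}{b-\theta}\log T$ bound on nearby zeros, and this is precisely what produces the $(b-\theta)^{-1}$ dependence in the additive $\log T$ term. The in-particular assertion \eqref{zerosbetweenone} is then just the specialization $T-R=2$ of \eqref{zerosbetween}, with constants collected.
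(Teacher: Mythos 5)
Your overall strategy is the right one and, as far as one can tell, the same as the paper's: this lemma is quoted from \cite{Rev-MP} without proof, but the footnote (the stray term ``$8\pi\log T$'' that gets divided by $2\pi(b-\alpha_0)$ with $b-\alpha_0$ a fixed fraction of $b-\theta$, turning into the $\tfrac{16}{3(b-\theta)}\log(\cdots)$ term) makes clear that the proof there is exactly a Littlewood-lemma computation on a rectangle $[\alpha_0,\sigma_1]\times[iR,iT]$ with the left edge bounded by \eqref{zsgenlarget}, the right edge by \eqref{reciprok}, and the horizontal edges by a Backlund-type argument fed by the local zero counts of Lemma \ref{l:Jensen}. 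So the decomposition, the choice of auxiliary abscissae, and the sources of each constant are all correctly identified.

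The gap is in the step you yourself flag as the main obstacle, and it is not merely a matter of bookkeeping. You assert the pointwise bound $|\arg\ze(\si+iU)|=O\bigl(\log U/(b-\theta)\bigr)$ on the horizontal edges; integrating this over $\si\in[\alpha_0,\sigma_1]$ (an interval of length $\asymp 1$, not $\asymp b-\theta$) and then dividing by $2\pi(b-\alpha_0)\asymp b-\theta$ produces an additive term of order $\log T/(b-\theta)^2$, i.e.\ weaker by a full factor $1/(b-\theta)$ than the term $\tfrac{16}{3(b-\theta)}\log\bigl(\tfrac{60(A+\kappa)^2}{b-\theta}T\bigr)$ you are trying to reproduce. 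To land on \eqref{zerosbetween} the two horizontal contributions must total $8\pi\log(\cdots)$ \emph{before} the division, i.e.\ one needs an argument bound on the horizontal edges that is uniform in $b$; your sketch as written does not supply this, and this is precisely the delicate point that the footnote's correction concerns. A second, smaller issue: \eqref{zerosbetweenone} is not obtained by blindly setting $T-R=2$ in \eqref{zerosbetween} --- doing so with the stated constants gives $\tfrac{8}{3\pi}\log 11.4+\tfrac{16}{3}\log 60$ plus the $\log(T+1)$ versus $\log T$ adjustment, which comes out slightly above the constant $24$; one has to redo the final collection of terms for this special case. Since the entire content of the lemma is its explicit constants, these two points need to be carried out in detail rather than asserted.
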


\subsection{The logarithmic derivative of the Beurling $\zeta$}
\label{sec:logder}

\begin{lemma}\label{l:borcar} Let $z=a+it_0$ with $|t_0| \geq e^{5/4}+\sqrt{3}=5.222\ldots$ and $\theta<a\leq 1$. With $\delta:=(a-\theta)/3$ denote by $S$ the
(multi)set of the $\ze$-zeroes (listed according to multiplicity)
not farther from $z$ than $\delta$. Then we have
\begin{align}\label{zlogprime}
\left|\frac{\ze'}{\ze}(z)-\sum_{\rho\in S} \frac{1}{z-\rho}
\right| & < \frac{9(1-\theta)}{(a-\theta)^2}
\left(22.5+14\log(A+\kappa)+14\log \frac{1}{a-\theta} + 5\log |t_0|\right).
\end{align}

Furthermore, for $0 \le |t_0| \le 5.23$ an analogous estimate (without any term containing $\log |t_0|$) holds true:
\begin{equation}\label{zlogprime-tsmall}
\left|\frac{\ze'}{\ze}(z)+\frac{1}{z-1}-\sum_{\rho\in S} \frac{1}{z-\rho}
\right|  \le \frac{9(1-\theta)}{(a-\theta)^2}
\left(34+14\log(A+\kappa)+18\log \frac{1}{a-\theta}\right).
\end{equation}
\end{lemma}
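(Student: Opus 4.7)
The plan is to deduce the stated estimates from the classical Borel--Carath\'eodory/Landau lemma: if $f$ is holomorphic on a disk $\overline{D}(s_0,R)$, $f(s_0)\ne 0$, and $M$ denotes the maximum of $\log|f(s)/f(s_0)|$ on $|s-s_0|=R$, then on a concentric smaller disk one has
\[
\left|\frac{f'(s)}{f(s)}-\sum_{|\rho-s_0|\le R/2}\frac{1}{s-\rho}\right|\le \frac{c\,M}{R},
\]
with an absolute constant $c$, the sum being taken over the zeros $\rho$ of $f$. I would apply this either to $f=\ze$ (when $|t_0|\ge e^{5/4}+\sqrt{3}$, so the pole $s=1$ stays outside) or to $f(s):=(s-1)\ze(s)/\kappa$ (when $|t_0|\le 5.23$, to absorb the pole).

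\textbf{Choice of disk and bounds for $\ze$.} Center the disk at $s_0:=p+it_0$ with $p:=1+(1-\theta)=2-\theta$, which lies in the half-plane of absolute convergence. Take $R$ comparable to $1-\theta$, precisely of the shape $R=p-q$ with $q:=\theta+\tfrac{2}{3}(a-\theta)$ as in Remark~\ref{r:alsoindisk}; this guarantees that the closed disk $\overline{D}(z,\delta)$ of interest (with $\delta=(a-\theta)/3$) lies well inside $D(s_0,R/2)$, while the boundary $|s-s_0|=R$ stays in $\{\sigma>\theta\}$ and, for $|t_0|\ge e^{5/4}+\sqrt{3}$, well away from $s=1$. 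The upper bound on $|\ze|$ on $\partial D(s_0,R)$ is supplied by \eqref{zsgenlarget} of Lemma~\ref{l:zkiss}, giving $|\ze(s)|\le \sqrt{2}(A+\kappa)|t_0|/(a-\theta)$ up to harmless factors; the lower bound at the center $s_0$, where $\Re s_0=2-\theta>1$, comes from \eqref{reciprok} of Lemma~\ref{l:oneperzeta}, yielding $|\ze(s_0)|\ge (1-\theta)/((A+\kappa)(2-\theta))$. Combining these gives
\[
M=O\!\left(\log|t_0|+\log(A+\kappa)+\log\tfrac{1}{1-\theta}\right),
\]
with explicit constants that can be tracked through the two lemmas.

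\textbf{Fundamental lemma and zero accounting.} Plugging $M$ into the Borel--Carath\'eodory lemma yields a bound for $\ze'(z)/\ze(z)-\sum'1/(z-\rho)$, where $\sum'$ ranges over the zeros in $D(s_0,R/2)$. To convert this into the stated sum over $S=\{\rho:|z-\rho|\le\delta\}$, write $\sum'=\sum_{S}+\sum''$, where $\sum''$ runs over zeros in $D(s_0,R/2)\setminus D(z,\delta)$. Each of those "far" zeros contributes at most $1/\delta=3/(a-\theta)$ in absolute value, and their number is controlled by Lemma~\ref{l:Jensen} (via its remark), which gives a Jensen-type count of order $\tfrac{1-\theta}{a-\theta}(\log|t_0|+\log(A+\kappa)+\log\tfrac{1}{a-\theta})$. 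Thus the total contribution of $\sum''$ is absorbed into the $9(1-\theta)/(a-\theta)^2$-type prefactor. Collecting the constants at the precise values yields \eqref{zlogprime}.

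\textbf{The small-$|t_0|$ case and main obstacle.} For $|t_0|\le 5.23$ the disk $D(s_0,R)$ cannot avoid $s=1$. Here I would work instead with $\tilde\ze(s):=(s-1)\ze(s)/\kappa$, which is holomorphic in $\sigma>\theta$ and, thanks to \eqref{zssmin1}, satisfies $|\tilde\ze(s)-1|\le 100A/((a-\theta))$ on the relevant region, with a similarly cheap lower bound at $s_0$. The identity $\tilde\ze'/\tilde\ze = \ze'/\ze+1/(s-1)$ then produces precisely the quantity appearing on the left of \eqref{zlogprime-tsmall}, and the rest of the argument proceeds as before, but now the "free" contribution to $M$ is a constant instead of $\log|t_0|$. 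The main technical obstacle throughout is bookkeeping: choosing the radii $R$, $R/2$ and $\delta$ so that the geometric inclusions (small $\subset$ zero-counting $\subset$ boundary-bound disks) hold simultaneously, while the minimum of $\sigma$ on the outer disk remains strictly above $\theta$ so that Lemma~\ref{l:zkiss} is applicable, and so that the coefficients come out at most $9(1-\theta)/(a-\theta)^2$ times the specific numerical constants claimed. Apart from this careful tracking of constants, each step is a direct translation of the classical Riemann-$\ze$ argument into the Beurling setting, made possible by the explicit upper and lower bounds for $\ze$ proved in Lemmas~\ref{l:zkiss} and \ref{l:oneperzeta}.
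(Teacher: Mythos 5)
Your proposal follows essentially the same route as the paper's proof of this lemma (which is in fact deferred to Part I, \cite{Rev-MP}): a Landau/Borel--Carath\'eodory argument on a disk centred at $p+it_0$ with $p=1+(1-\theta)$, the upper bound for $|\zeta|$ from Lemma \ref{l:zkiss}, the lower bound at the centre from Lemma \ref{l:oneperzeta}, the count of the ``far'' zeros from Lemma \ref{l:Jensen} (Remark \ref{r:alsoindisk} names exactly your radii $r=p-q$), and the passage to $(s-1)\zeta(s)/\kappa$ with \eqref{zssmin1} to absorb the pole when $|t_0|\le 5.23$. One bookkeeping slip to repair: with $R=p-q$ the disk $\overline{D}(z,\delta)$ is internally \emph{tangent} to $D(s_0,R)$ rather than contained in $D(s_0,R/2)$, so the circle on which you bound $|\zeta(s)/\zeta(s_0)|$ must have a strictly larger radius (still within $\sigma>\theta$ and, for $|t_0|\ge e^{5/4}+\sqrt{3}$, away from $s=1$, so \eqref{zsgeneral} still applies), with $D(s_0,p-q)$ serving as the intermediate disk whose zeros enter the sum; with that correction the argument is sound.
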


\begin{lemma}\label{l:path-translates} For any given parameter
$\theta<b<1$, and for any finite and symmetric to zero set $\A\subset[-iB,iB]$ of cardinality $\#\A=n$, there exists a broken line $\Gamma=\Gamma_b^{\A}$, symmetric to the real axis and consisting of horizontal and vertical line segments only, so that its upper half is
$$
\Gamma_{+}= \bigcup_{k=1}^{\infty}
\{[\sigma_{k-1}+it_{k-1},\sigma_{k-1}+it_{k}] \cup
[\sigma_{k-1}+it_{k},\sigma_{k}+it_{k}]\},
$$
with $\sigma_j\in [\frac{b+\theta}{2},b]$, ($j\in\NN$),  $t_0=0$, $t_1\in[4,5]$ and $t_j\in
[t_{j-1}+1,t_{j-1}+2]$ $(j\geq 2)$ and satisfying that the
distance of any $\A$-translate $\rho+ i\alpha ~(i\alpha\in\A)$ of a $\zeta$-zero $\rho$ from any point $s=\sigma+it \in \Gamma$ is at least $d:=d(t):=d(b,\theta,n,B;t)$ with\footnote{As is mentioned in the footnote there, in Lemma \ref{c:zerosinrange} a slight correction of the formulation was due, entailing some corresponding corrections also here and in the next lemma. This involves only the values of the numerical constants, which--as long as they are some effective constants anyway--bear no further importance for us here.}
\begin{equation}\label{ddist-corr}
d(t):=\frac{(b-\theta)^2}{4n \left(12 \log(|t|+B+5) + 51 \log (A+\kappa) + 31 \log\frac{1}{b-\theta}+ 113\right)}.
\end{equation}
Moreover, the same separation from translates of $\zeta$-zeros holds also for the
whole horizontal line segments $H_k:=[\frac{b+\theta}{2}+it_k,2+it_k]$, $k=1,\dots,\infty$, and their reflections $\overline{H_k}:=[\frac{b+\theta}{2}-it_k,2-it_k]$, $k=1,\dots,\infty$, and furthermore the same separation holds from the translated singularity points $1+i\al$ of $\zeta$, too.
\end{lemma}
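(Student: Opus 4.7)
The plan is to build $\Gamma_{+}$ by a greedy inductive procedure, at each stage choosing a height $t_k$ and an abscissa $\sigma_k$ by a pigeonhole/measure argument. The only analytic input required is Lemma \ref{c:zerosinrange}. Call $\mathcal{O}$ the set of \emph{obstacles}: the translated pole points $\{1+i\alpha:i\alpha\in\A\}$ together with the translates $\{\rho+i\alpha:\ze(\rho)=0,\,i\alpha\in\A\}$ of $\ze$-zeros.

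First I estimate the local density of $\mathcal O$. By Lemma \ref{c:zerosinrange} (applied with left abscissa $\frac{b+\theta}{2}$ in place of $b$), the number of $\ze$-zeros with imaginary part in an interval of length $2$ around a height $T$ is at most $\frac{c}{b-\theta}$ times a logarithm of $|T|$. Since $\A\subset[-iB,iB]$ contains $n$ points and a translate $\rho+i\alpha$ has $\Im\rho = \Im(\rho+i\alpha)-\alpha$ with $|\alpha|\le B$, summing the zero count over the $n$ shifts and adding the $n$ translated poles yields that the number of points of $\mathcal O$ with imaginary part in $[T-1,T+1]$ is at most $M(T)\le\frac{n\,K(T)}{b-\theta}$, where $K(T)$ coincides, up to bookkeeping of explicit numerical constants, with the bracketed expression in the denominator of \eqref{ddist-corr}.

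The construction then proceeds in two passes. First fix $t_0:=0$ and inductively choose $t_k\in[t_{k-1}+1,t_{k-1}+2]$ (respectively $t_1\in[4,5]$) such that $|t_k-\Im\omega|\ge d(t_k)$ for every $\omega\in\mathcal O$: the forbidden subset has measure at most $2d\cdot M(t_{k-1}+2)$, which is strictly less than $1$ by the very definition of $d$ in \eqref{ddist-corr}, so a valid $t_k$ exists. Then, for each $k\ge 0$, choose $\sigma_k\in[\frac{b+\theta}{2},b]$---an interval of length $\frac{b-\theta}{2}$---so that $|\sigma_k-\Re\omega|\ge d$ for every $\omega\in\mathcal O$ with $\Im\omega\in[t_k,t_{k+1}]$; here the forbidden measure is again at most $2d\cdot M$, which, thanks to the additional factor $(b-\theta)$ in the numerator of $d$, is strictly less than $\frac{b-\theta}{2}$, so a valid $\sigma_k$ exists.

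The distance verification is then routine. On the vertical piece at $\sigma_{k-1}$ between heights $t_{k-1}$ and $t_k$, an obstacle $\omega$ with $\Im\omega\in[t_{k-1},t_k]$ is separated by $\ge d$ in the real direction (by the choice of $\sigma_{k-1}$), while one with $\Im\omega$ outside this range is separated by $\ge d$ in the imaginary direction (by the choice of $t_{k-1},t_k$). The horizontal pieces at height $t_k$, including the full-length extensions $H_k$ and $\overline{H_k}$ out to $\Re=2$, are separated by $\ge d$ directly via $|t_k-\Im\omega|\ge d$; translated poles, whose real parts equal $1\notin[\frac{b+\theta}{2},b]$, are only dangerous for the horizontal pieces, and there the same $|t_k-\Im\omega|\ge d$ suffices. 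The hard part of the argument is neither conceptual nor analytic: it is the careful tracking of numerical constants so that the logarithmic factor $K(T)$ appearing in $M(T)$ matches the denominator of \eqref{ddist-corr}, which reduces to substituting the constants from \eqref{zerosbetween}/\eqref{zerosbetweenone} into the two pigeonhole inequalities above.
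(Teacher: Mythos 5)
The paper does not actually prove this lemma here --- it is one of the results imported without proof from Part~I \cite{Rev-MP} --- so there is no in-text argument to compare against; but your measure-theoretic pigeonhole construction is the standard one for such zero-avoiding broken lines and is, in substance, correct: Lemma \ref{c:zerosinrange} bounds the obstacle count $M$ in a unit height window, and $2dM$ is then forced below $1$ resp.\ below $\frac{b-\theta}{2}$ by the shape of \eqref{ddist-corr}, so both the $t_k$ and the $\sigma_k$ can be selected. One clause of your verification, however, is wrong as stated, even though your construction happens to cover the case: you dismiss the translated poles $1+i\alpha$ on the vertical segments on the grounds that $1\notin[\frac{b+\theta}{2},b]$, but the horizontal gap $1-b$ can be \emph{smaller} than $d$ when $b$ is close to $1$ (e.g.\ $\theta=0.1$, $b=0.999$ gives $d$ as large as about $(b-\theta)^2/452\approx 0.0018$, while $1-b=0.001$), so a vertical segment at abscissa near $b$ can come within $d$ of a pole translate of matching height. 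The separation there must instead be read off from the requirement $|\sigma_k-\Re\omega|\ge d$ imposed for \emph{all} $\omega\in\mathcal O$ with imaginary part in $[t_k,t_{k+1}]$ --- poles included --- which your two-pass scheme does impose (the pole abscissa $1$ only removes an extra set of measure at most $d$ from the interval of admissible $\sigma_k$). Two further points deserve a line each in a full write-up: since $d(t)$ decreases in $|t|$, a point $s$ with $\Im s=t<t_k$ needs separation $d(t)>d(t_k)$ from obstacles above height $t_k$, which holds because $\Im\omega-t\ge d(t_k)+(t_k-t)$ and $d(t)-d(t_k)\ll t_k-t$; and near the real axis, where $t_0=0$ is fixed rather than chosen, obstacles with small negative imaginary part must be handled via the symmetry of the obstacle set about $\RR$ (here the hypothesis that $\A$ is symmetric to zero is used).
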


\begin{lemma}\label{l:zzpongamma-c} For any $0<\theta<b<1$ and symmetric to $\RR$ translation set $\A\subset [-iB,iB]$, on the broken line $\Gamma=\Gamma_b^{\A}$, constructed in the above Lemma \ref{l:path-translates}, as well as on the horizontal line segments $H_k:=[a+it_k,2+it_k]$ and  $\overline{H_k}$, $k=1,\dots,\infty$ with $a:=\frac{b+\theta}{2}$, we have uniformly for all $\alpha \in \A$
\begin{equation}\label{linezest-c}
\left| \frac{\ze'}{\ze}(s+i\alpha) \right| \le n
\frac{1-\theta}{(b-\theta)^{3}} \left(10 \log(|t|+B+5)+60\log(A+\kappa) + 42 \log\frac1{b-\theta}+ 140\right)^2.
\end{equation}
\end{lemma}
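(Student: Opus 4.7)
The plan is to combine two ingredients already in hand: the Borel--Carath\'eodory-type representation for $\zeta'/\zeta$ supplied by Lemma~\ref{l:borcar}, together with the zero-translate separation along $\Gamma$ and the $H_k$ guaranteed by Lemma~\ref{l:path-translates}. Fix an element $i\alpha\in\A$ (so $\alpha\in\RR$) and a point $s=\sigma+it$ on $\Gamma\cup\bigcup_k(H_k\cup\overline{H_k})$; set $z:=s+i\alpha$. Then $\Re z=\sigma\ge a:=(b+\theta)/2$ and $|\Im z|\le |t|+B$.

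I first treat the delicate range $\theta<\sigma\le 1$. Applying Lemma~\ref{l:borcar} at $z$---using \eqref{zlogprime} when $|\Im z|\ge e^{5/4}+\sqrt{3}$ and \eqref{zlogprime-tsmall} otherwise---represents $\zeta'/\zeta(z)$ as $\sum_{\rho\in S}(z-\rho)^{-1}$ plus a remainder of size $O\bigl((1-\theta)(b-\theta)^{-2}L\bigr)$, where $L$ is a logarithmic quantity of the shape $\log(|t|+B+5)+\log(A+\kappa)+\log\frac{1}{b-\theta}$, and $S$ denotes the multiset of $\zeta$-zeros within distance $\delta:=(\sigma-\theta)/3$ of $z$.

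Next I estimate $\sum_{\rho\in S}|z-\rho|^{-1}$ termwise. Because $\A$ is symmetric about $0$, the element $-i\alpha$ also lies in $\A$, so $\rho-i\alpha=\rho+i(-\alpha)$ is an $\A$-translate of a $\zeta$-zero, and Lemma~\ref{l:path-translates} forces $|s-(\rho-i\alpha)|\ge d(t)$, i.e.\ $|z-\rho|\ge d(t)$. Hence $\sum_{\rho\in S}|z-\rho|^{-1}\le|S|/d(t)$. The cardinality $|S|$ is controlled by Lemma~\ref{c:zerosinrange} in its $[T-1,T+1]$ form, applied at height $T=|t+\alpha|$ to a rectangle of unit height containing the disk of radius $\delta$ around $z$; this gives $|S|\le(b-\theta)^{-1}L'$, with $L'$ of the same logarithmic shape as $L$. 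Inserting $d(t)^{-1}=\frac{4n}{(b-\theta)^2}L''$ from \eqref{ddist-corr} (with $L''$ again logarithmic of the same type), the zero sum becomes $O\bigl(n(b-\theta)^{-3}L'L''\bigr)$. This quadratic-logarithm term dominates the linear Borel--Carath\'eodory remainder, and collapsing $L,L',L''$ into one unified logarithmic expression produces the bound $n(1-\theta)(b-\theta)^{-3}(\,\cdot\,)^2$ matching \eqref{linezest-c}.

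On the remaining portion of $H_k\cup\overline{H_k}$ where $\sigma>1$, the estimate is essentially free: Lemma~\ref{l:oneperzeta} yields $|\zeta(z)|\ge(\sigma-1)/((A+\kappa)\sigma)$, while $|\zeta'(z)|$ is controlled by differentiating the integral representation of Lemma~\ref{l:zetaxs} (or directly via the Dirichlet series $\sum\Lambda(g)|g|^{-\sigma}$), giving an estimate for $|\zeta'/\zeta(z)|$ well below the right-hand side of~\eqref{linezest-c}. The main obstacle is not conceptual but bookkeeping: the precise numerical constants $10,60,42,140$ inside the squared bracket appear only after carefully tracking the constants in \eqref{zlogprime}--\eqref{zlogprime-tsmall}, \eqref{zerosbetweenone} and \eqref{ddist-corr}, and then dominating every logarithmic contribution by a single common log-quantity.
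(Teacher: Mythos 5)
Your main line of attack is the right one and is the one the lemma's phrasing is designed for: write $z=s+i\alpha$, represent $\zeta'/\zeta(z)$ via Lemma~\ref{l:borcar} as a sum over nearby zeros plus a logarithmic remainder, use the symmetry of $\A$ (so that $\rho-i\alpha$ is itself an $\A$-translate of a zero) to get $|z-\rho|\ge d(t)$ from Lemma~\ref{l:path-translates}, and multiply the zero count by $d(t)^{-1}$ to produce the $n(1-\theta)(b-\theta)^{-3}(\log)^2$ shape. Two points, one substantive and one cosmetic, need attention.

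The substantive one is your claim that the range $\sigma>1$ on $H_k\cup\overline{H_k}$ is ``essentially free'' via Lemma~\ref{l:oneperzeta}. It is not: the lower bound $|\zeta(z)|\ge(\sigma-1)/((A+\kappa)\sigma)$ degenerates as $\sigma\to1^{+}$, and the trivial Dirichlet-series bound $|\zeta'/\zeta(\sigma+i\tau)|\le-\zeta'/\zeta(\sigma)\asymp(\sigma-1)^{-1}$ likewise blows up there; moreover a zero with $\beta$ just below $1$ and $\gamma$ near $t_k+\alpha$ can genuinely make $\zeta'/\zeta$ of size $\asymp d(t)^{-1}$ at such points. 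Only for $\sigma-1$ bounded below (say $\sigma\ge1+(1-\theta)/4$) is the range free. For $1<\sigma<1+(1-\theta)/4$ you must run the same Borel--Carath\'eodory/zero-sum argument, keeping the pole term $-1/(z-1)$, and bound it from below by the separation from the \emph{translated singularities} $1+i\alpha$ that Lemma~\ref{l:path-translates} explicitly guarantees --- a clause of that lemma your proposal never invokes, which is the telltale sign that this subrange was not actually covered. (The disk of Remark~\ref{r:alsoindisk}, centered at $2-\theta+iT$, is exactly what lets the representation reach to the right of the $1$-line.) The cosmetic point: for the cardinality $|S|$ you should use Lemma~\ref{l:Jensen} together with Remark~\ref{r:alsoindisk}, which gives $|S|\ll\frac{1-\theta}{b-\theta}L'$; the bound \eqref{zerosbetweenone} you cite gives only $\frac{1}{b-\theta}L'$ and would lose the factor $1-\theta$ in the numerator of \eqref{linezest-c}, so the stated inequality would not follow from your count.
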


\subsection{Riemann-von Mangoldt type formulae of prime distribution
with zeroes of the Beurling $\zeta$}\label{sec:sumrho}

We denote the set of $\zeta$-zeroes, lying to the right of $\Gamma$, by $\Z(\Gamma)$, and denote $\Z(\Gamma,T)$ the set of those zeroes $\rho=\beta+i\gamma\in \Z(\Gamma)$ which satisfy $|\gamma|\leq T$. The next statement is Theorem 5.1 from \cite{Rev-MP}.

\begin{lemma}[Riemann--von Mangoldt formula]\label{l:vonMangoldt}
Let $\theta<b<1$ and $\Gamma=\Gamma_b^{\{0\}}$ be the curve defined in Lemma \ref{l:path-translates} for the one-element set $\A:=\{0\}$ with $t_k$ denoting the corresponding set of abscissae in the construction.
Then for any $k=1,2,\ldots$, and $4 \leq t_k<x$ we have
$$
\psi(x)=x - \sum_{\rho \in \Z(\Gamma,t_k)}
\frac{x^{\rho}}{\rho} + O\left( \frac{1-\theta}{(b-\theta)^{3}}
\left(A+\kappa+\log \frac{x}{b-\theta}\right)^3 \big( \frac{x}{t_k} + x^b \big)\right).
$$
\end{lemma}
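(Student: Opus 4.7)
My plan is to follow the classical Riemann--von Mangoldt derivation: apply the truncated Perron formula to $-\zeta'/\zeta$, shift the contour onto the broken line $\Gamma$, evaluate by residues, and control the resulting contour integrals via Lemma~\ref{l:zzpongamma-c}. All of the heavy analytic work has already been isolated into \S\ref{sec:basics}, so the argument reduces to careful bookkeeping.

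First, for $c=1+1/\log x$ and $T=t_k$, the truncated Perron formula applied to $-\zeta'(s)/\zeta(s)=\sum_{g\in\G}\Lambda(g)/|g|^s$ (convergent for $\Re s>1$) gives
$$
\psi(x)=\frac{1}{2\pi i}\int_{c-it_k}^{c+it_k}\left(-\frac{\zeta'}{\zeta}(s)\right)\frac{x^s}{s}\,ds+E_{\text{Perron}}.
$$
The bulk of $E_{\text{Perron}}$ is bounded by $x^c\cdot(-\zeta'/\zeta)(c)/t_k\ll x\log x/t_k$ via Lemma~\ref{l:oneperzeta} style estimates at $c$ close to $1$, and the boundary contributions from $g$ with $|g|$ very close to $x$ are handled by a dyadic decomposition: on each shell of radius $x\cdot 2^{-j}$ about $x$ the $\Lambda$-mass is $O(x\,2^{-j}\log x+x^\theta\log x)$ by Axiom~A together with the Chebyshev-type bound, so summing in $j$ gives at worst $O(x(\log x)^2/t_k)$, which already fits inside the stated bound.

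Next, I would close the Perron contour counterclockwise by tracing in order: the Perron segment at $\sigma=c$, the subsegment of $H_k=[a+it_k,2+it_k]$ leftwards from $c+it_k$ to the corresponding point of $\Gamma$ (well-defined since $a=(b+\theta)/2\le\sigma_{k-1}\le b<1<c<2$), the portion of $\Gamma$ between heights $+t_k$ and $-t_k$, and the corresponding subsegment of $\overline{H_k}$. Inside the enclosed region the only singularities of the integrand are the simple pole of $\zeta$ at $s=1$ (contributing residue $x$, since $\zeta(s)\sim\kappa/(s-1)$ forces $-\zeta'/\zeta(s)\sim 1/(s-1)$) and the zeros $\rho\in\Z(\Gamma,t_k)$ (each contributing $-x^\rho/\rho$ with multiplicity). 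The residue theorem thus identifies the Perron integral with $x-\sum_{\rho\in\Z(\Gamma,t_k)}x^\rho/\rho$ minus the three remaining contour integrals.

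To bound those, I invoke Lemma~\ref{l:zzpongamma-c} with $n=1$, $\A=\{0\}$, $B=0$, abbreviating $L(t):=10\log(|t|+5)+60\log(A+\kappa)+42\log\frac{1}{b-\theta}+140$. On $H_k$ (and symmetrically $\overline{H_k}$), $|\zeta'/\zeta(s)|\le\frac{1-\theta}{(b-\theta)^3}L(t_k)^2$, while $|x^s/s|\le ex/t_k$ and the segment length is $\le 2$, giving $O\!\bigl(\frac{1-\theta}{(b-\theta)^3}L(t_k)^2\,x/t_k\bigr)$. On $\Gamma$, $|x^s|\le x^b$ and the vertical pieces contribute $\ll\frac{1-\theta}{(b-\theta)^3}x^b\int_1^{t_k}L(t)^2/t\,dt\ll\frac{1-\theta}{(b-\theta)^3}x^b L(t_k)^3$; the horizontal jogs of $\Gamma$ (widths $\le b-\theta$, roughly one per unit of height) contribute $\ll\frac{1-\theta}{(b-\theta)^3}x^b\sum_{j\le k}L(t_j)^2(b-\theta)/t_j$, again of order $\frac{1-\theta}{(b-\theta)^3}x^b L(t_k)^3$. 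Since $t_k<x$ absorbs $L(t_k)\ll A+\kappa+\log(x/(b-\theta))$, collecting everything produces exactly the stated error term.

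The main obstacle is simply the bookkeeping: matching the Perron truncation error into the same cubed-logarithm envelope as the contour remainders, and checking that the linearly-many horizontal jogs of $\Gamma$ do not beat the vertical pieces. Neither is conceptually delicate, but both require tracing the numerical constants from Lemmas~\ref{l:path-translates} and~\ref{l:zzpongamma-c} into the final exponent of the logarithm, exactly the task performed in Theorem~5.1 of \cite{Rev-MP}.
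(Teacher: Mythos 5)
Your proposal is correct and follows essentially the same route as the paper's source for this statement: the paper does not reprove the lemma here but quotes it as Theorem~5.1 of \cite{Rev-MP}, whose argument is exactly the truncated Perron formula for $-\zeta'/\zeta$, a contour shift onto $\Gamma$ along the horizontal segments $H_k$, $\overline{H_k}$, residues at $s=1$ and at the zeros in $\Z(\Gamma,t_k)$, and the bound of Lemma~\ref{l:zzpongamma-c} on the remaining contour pieces. The only point worth noting is that your dyadic treatment of the near-diagonal Perron terms also produces an $O(x^{\theta}\log^2 x)$ contribution beyond the stated $O(x\log^2 x/t_k)$, but this is absorbed by the $x^{b}$ term in the error, so the bookkeeping closes as claimed.
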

Here let us call attention to the regrettable fact that in \cite{Rev-MP} a dumb error occurred in the last line of the proof of Theorem 5.1 (which is in fact the very last line of the whole paper, too). Namely, $x/t_k$ was erroneously estimated by $4x^b$. The formulation here is thus also a correction\footnote{Note that in formula (5.16) of \cite{Rev-MP} $\log(t_k)$ is superfluous in the right hand side, because it was already incorporated into the preceding log power.} of the mistake made in the formulation of Theorem 5.1 in \cite{Rev-MP} where instead of the correct last factor $\big(\frac{x}{t_k} + x^b\big)$, only $x^b$ was put down.

\section{A density theorem for $\ze$-zeros close to the $1$-line}
\label{sec:density}

\subsection{A discussion of condition G}
Recall that we have introduced Condition B and Condition G for use in this section.

There are many natural examples where Condition G is met. E.g. if
$\G$ is the ideal ring of an algebraic number field $\KK$, then a
well-known result, see e.g. Lemma 4.9. on p. 143 of \cite{Nar},
provides the estimate $G(m)\leq d_n(m)= O(m^{\varepsilon})$ for
all $\ve>0$ (where $d_n(m)$ is the classical $n$-term divisor
function and $n$ is the degree of the algebraic number field $\KK$
in question). It is clear that in case $G(\nu)=O(\nu^{\ve})$ also
Condition G must hold. Actually, $d_n(m)\leq d^{n-1}(m)$ and by
well-known number theory we also have
$$
\sum_{m<X} d^q(m) \sim C_q X \log^{{2^q}-1} X \qquad (X\to
\infty),
$$
so that Condition G holds for all exponent $p$.

More generally, let $\mathfrak A$ denote the category of all
finite abelian groups,  $\mathfrak S$ be the category of all
semisimple associative rings of finite cardinal, and $\mathfrak F$
be the category of all finitely generated torsion modules over the
ring $D$ of all algebraic integers in some given algebraic number
field $\KK$. One can also consider ${\mathfrak F}^{(\langle k
\rangle)}$, for any given finite or infinite sequence $\langle k
\rangle=k_1,\dots,k_n,\dots$, the category of all modules $M\in
\mathfrak F$ such that every indecomposable direct summand of $M$
is isomorphic to a cyclic module of the form $D/P^r$, where $P$ is
a prime ideal in $D$ and $r=k_i$ for some $i$, see \cite[p.
117]{Knopf}. These structures contain, as sub-semigroups, many
other important arithmetical categories like semisimple finite
dimensional associative algebras over a given field, certain
Galois fields, etc.: see \cite[Ch. 1]{Knopf} for details, in
particular page 16-21 for more detailed description of these and
related structures.

For abelian groups of finite order, the counting function
$G_{\mathfrak A}(m)$ has, by \cite[V.1.10 Corollary]{Knopf}, an
asymptotical $k^{\rm th}$ moment for every $k\in\NN$, which is a
strong form of the above Condition G: the $o(\log X)$ function is
just $C+o(1)$. In $\mathfrak S$ the counting function
$G_{\mathfrak S}(n)$ also has, by \cite[V.1.13 Theorem]{Knopf}, an
asymptotical $k^{\rm th}$ moment for every $k\in\NN$, implying
again the strong form of Condition G. For $\mathfrak F$ this is
found in \cite[V.1.9. Theorem]{Knopf}, too. (The reason of this is
the intimate relationship of the value of $G$ on prime powers
$p^r$ with the partition function $p(r)$, see p. 124 of
\cite{Knopf}).

Note that these categories (and many others) all
satisfy Axiom A, see \cite{Knopf}, p. 16-20 and 120-121, e.g.

Let us recall the following facts from the theory of arithmetical
semigroups (see \cite{Knopf}, IV.4.1. Proposition and V.2.9.
Theorem.)

\begin{lemma}[Knopfmacher]\label{l:dksum} Let $\G$ be an
arithmetical semigroup satisfying Axiom A. Then for the divisor
function $d(g)$ on $\G$ we have with any $k\in \NN$ the asymptotic
equivalence formula
\begin{equation}\label{dksum}
\sum_{g\in\G;|g|<X} d^k(g) \sim  A_0
X \log^{2^k-1} X \qquad (X\to \infty),
\end{equation}
with $A_0=A_0(\GG,k)$ a nonzero constant. We also have
\begin{equation}\label{dlittle}
\limsup_{|g|\to \infty} \frac{\log d(g) \log\log |g|}{\log |g|} =
2,
\end{equation}
so $\log d(g)=o(\log|g|)$, too.
\end{lemma}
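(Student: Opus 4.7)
Both assertions are classical Knopfmacher results drawn from IV.4 and V.2 of \cite{Knopf}, and I would reproduce them by assembling standard Dirichlet-series and Wigert-type ingredients in the Beurling setting. For the asymptotic formula, the plan is to study the Dirichlet series
\begin{equation*}
D_k(s):=\sum_{g\in\G}\frac{d^k(g)}{|g|^s}=\prod_{p\in\PP}\sum_{a=0}^{\infty}\frac{(a+1)^k}{|p|^{as}}\qquad(\Re s>1),
\end{equation*}
then exploit the algebraic identity $\sum_{a\ge 0}(a+1)^k x^a=P_k(x)/(1-x)^{k+1}$ for an explicit polynomial $P_k$ with $P_k(1)>0$ to re-factor each Euler factor. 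A short induction on $k$ produces $D_k(s)=\zeta(s)^{2^k}\Phi_k(s)$ with $\Phi_k$ an Euler-product Dirichlet series absolutely convergent in a half-plane $\Re s>\sigma_k$ for some $\sigma_k<1$, mirroring the classical Ramanujan identity $D_2(s)=\zeta(s)^4/\zeta(2s)$ and its higher-$k$ analogues. Applying Perron's formula at abscissa $c=1+1/\log X$ and shifting the contour past the pole of order $2^k$ at $s=1$ extracts the residue $A_0 X\log^{2^k-1}X$, where $A_0=A_0(\G,k)$ is the leading Laurent coefficient (nonzero since $P_k(1)\Phi_k(1)>0$). The estimate of the shifted-contour error is exactly where the polynomial growth of Lemma \ref{l:zkiss} together with the zero-density bounds of Lemmas \ref{l:Jensen}--\ref{c:zerosinrange} enter decisively.

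For the $\limsup$ estimate, the strategy is Wigert's classical splitting. Writing $g=\prod p_i^{a_i}$ and using $a_i+1\le 2^{a_i}$ when $|p_i|\le y$ together with $a_i+1\le|p_i|^{a_i/\log y}$ when $|p_i|>y$, one obtains
\begin{equation*}
\log d(g)\le(\log 2)\,\pi_{\G}(y)+\frac{\log|g|}{\log y}.
\end{equation*}
The Beurling PNT $\pi_{\G}(y)\sim y/\log y$ (which follows from Axiom A) then gives $\log d(g)=O(\log|g|/\log\log|g|)$ and a fortiori $\log d(g)=o(\log|g|)$, which is the version actually needed for the main density arguments; the precise value of the $\limsup$ constant is pinned down by a matching lower bound using primorials $g_N:=\prod_{|p|\le y_N}p$, whose divisor count is $2^{\pi_{\G}(y_N)}$ and whose norm is controlled by a $\vartheta$-type sum over primes.

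\textbf{Main obstacle.} Nothing in the preceding is novel compared to the classical $\NN$-case; the single technical hurdle in transferring the arguments is the need for (i) meromorphic continuation with polynomial-growth control of $\zeta$ just to the left of $\Re s=1$ and (ii) a PNT-quality asymptotic for $\pi_{\G}(y)$. Both are furnished under Axiom A by the lemmas assembled in \S\ref{sec:basics} and in \cite{Rev-MP}, after which Knopfmacher's proofs go through essentially verbatim. In practice the delicate step is the contour shift for the Perron integral, where the growth bounds of Lemma \ref{l:zkiss} and the zero-density estimates of Lemmas \ref{l:Jensen}--\ref{c:zerosinrange} must be combined quantitatively to push the vertical sides of the integration rectangle far enough left of $\Re s=1$ to leave the residual log-power main term $A_0 X\log^{2^k-1}X$ untouched.
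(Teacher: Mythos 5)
You should first be aware that the paper contains no proof of this lemma: it is imported verbatim from Knopfmacher (\cite{Knopf}, IV.4.1 Proposition and V.2.9 Theorem), and the semi-explicit constant $A_0=\frac{\kappa^{2^k}}{(2^k-1)!}\,\widetilde{g_k}(1)$ recorded right after the statement betrays the route Knopfmacher actually takes for \eqref{dksum}: the convolution identity $d^k=d_{2^k}*g_k$, where the Dirichlet series of $g_k$ (your $\Phi_k$) converges absolutely for $\Re s>1/2$, combined with an elementary inductive (hyperbola/partial summation) asymptotic for $\sum_{|g|\le x}d_j(g)$ from IV.3.7. Your factorization $D_k(s)=\zeta(s)^{2^k}\Phi_k(s)$ is the same algebraic input (though note it comes from matching the coefficient of $|p|^{-s}$, namely $d^k(p)=2^k$, not from the $P_k(x)/(1-x)^{k+1}$ identity, which governs the local factor as $|p|^{-s}\to1$ and is irrelevant at $s=1$). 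The analytic superstructure you put on it, however, does not work as stated in the Beurling setting: the only growth bound available to the left of $\sigma=1$ is $|\zeta(\sigma+it)|\ll|t|/(\sigma-\theta)$ from Lemma \ref{l:zkiss} --- there is no functional equation, hence no convexity saving --- so $|D_k(\sigma+it)|\ll|t|^{2^k}$ on any admissible vertical line and the shifted, unsmoothed Perron integral simply diverges. You would need to insert a rapidly decaying kernel and then de-smooth by a Tauberian step, at which point you have essentially rebuilt the elementary convolution argument. Moreover, the zero-density Lemmas \ref{l:Jensen}--\ref{c:zerosinrange} play no role here at all: $\zeta^{2^k}$ is a positive integer power, so the zeros of $\zeta$ are not singularities of $D_k$ and never obstruct the contour; only growth bounds are at issue. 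Presenting them as entering ``decisively'' signals a misdiagnosis of where the difficulty lies (and would also look circular next to Theorem \ref{th:density}, which consumes this lemma).

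Your treatment of \eqref{dlittle} is the standard Wigert splitting and is sound: Axiom A does yield the PNT for $\G$, hence $\pi_{\G}(y)\sim y/\log y$ and $\vartheta_{\G}(y)\sim y$, which give the upper bound $\log d(g)\le(\log 2+o(1))\log|g|/\log\log|g|$ (choose $y=\log|g|/(\log\log|g|)^2$; be a little careful that Beurling primes may have norms arbitrarily close to $1$, so the small-prime block must be bounded by $\pi_{\G}(y)\cdot O(\log\log|g|)$ rather than termwise by a universal constant) and the matching lower bound along the primorials $\prod_{|p|\le y}p$. Note that this argument pins the $\limsup$ at $\log 2$, exactly as in the classical Wigert theorem, which is consistent with $\G=\NN$ being an Axiom A semigroup; the value $2$ displayed in \eqref{dlittle} therefore deserves a second look against Knopfmacher's V.2.9. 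Since the paper only ever uses the consequence $\log d(g)=o(\log|g|)$, nothing downstream is affected either way.
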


Let us note that Knopfmacher uses the notation $B_k$ for the constant $A_0(\GG,k)$ occurring in formula \eqref{dksum}, and gives the semi-explicit expression $A_0=\frac{\kappa^{2^k}}{(2^k-1)!} \widetilde{g_k}(1)$ with $\widetilde{g_k}$ defined through the process of the proof of IV.3.7. Proposition of \cite{Knopf}. For a further discussion on the value of the constant see \cite{Rev-MP}.

\subsection{Proof of Theorem \ref{th:density}}

\begin{proof} We apply the by now standard treatment of
zero-detecting sums and large sieve type estimates. The derivation
here follows the relatively simple, straightforward argument in
\cite{Pintz9}, see also \cite{PintzNewDens}.

Let $Y$ be a large parameter, so that $T^3<Y<T^{\frac{3}{1-\theta}}$. We define the
arithmetical function $a(g):=a_T(g)$ on $\G$ as
$$
a(g):=\sum_{h|g;~|h|\leq T} \mu(h).
$$
Clearly, $a(1)=1$ and for $1<|g|\leq T$ we have $a(g)=0$: for
other values of $g$ we have $|a(g)|\leq d(g)$. For any complex
number $z\in\CC$ put
\begin{equation}\label{Hdef}
H:=H(z):=H(T,Y,z):=\sum_{|g|<Y}
\frac{a(g)}{|g|^z}=1+\sum_{T<|g|<Y} \frac{a(g)}{|g|^z}
\end{equation}
By Lemma \ref{l:Littlewood}, \eqref{zerosinth-corr} we already know about
the number of zeroes that $N(\alpha,X) \ll
\frac1{1-\theta}X \log\frac{X}{1-\theta}$, so using also
$\alpha>\frac{1+\theta}{2}$ and choosing $X:=\log^2 T
~Y^{1-\alpha}$ the number of zeroes below the height $X$ can be
estimated as
\begin{align}\label{NX}
N(\alpha,X) \ll \left(\frac{1}{1-\theta} \log T\right)^3 \,Y^{1-\alpha}.
\end{align}
On the other hand for the zeroes counted in
$N(\alpha,T)-N(\alpha,X)$, we select a separated "one-covering",
i.e. we take (one) zero $\rho_1=\beta_1+i\gamma_1$ with
$\beta_1>\alpha$, $\gamma_1\geq X$ and of minimal $\gamma_1$, and
then inductively, after $\rho_j$ has already been selected, we
take the next $\rho_{j+1}=\beta_{j+1}+i\gamma_{j+1}$ with minimal
$\gamma_{j+1}\geq \gamma_j+1$ but remaining $\leq T$. Clearly
$\gamma_j +1 \leq \gamma_{j+1}$ entails that the set
$\Z:=\{\rho_j\}$ of all the zeroes selected is finite: let its
number of elements be $Z:=\# \Z$, say.

By construction, the imaginary part of any $\ze$-zero in the
rectangle $[\alpha,1]\times[X,T]$ is within 1 to some element
$\rho_j$ of the set $\Z$, so by Lemma \ref{c:zerosinrange} \eqref{zerosbetweenone} and taking
into account $\alpha>\frac{1+\theta}{2}$ and the symmetry of the
zeroes with respect to the real line, too, we obtain with some implied explicit constant
$A_2:=A_2(\theta,A,\kappa)$ and for sufficiently large $T$ and $X$ the estimate
\begin{equation}\label{NXTZ}
N(\al,T)-N(\al,X) \ll Z \log T.
\end{equation}
Finally, we are to estimate $\# \Z =Z$. Firstly, for any
$\rho=\rho_j\in \Z$, we can write
\begin{align*}
H:=H(T,Y,\rho_j)& =\sum_{|g|\leq Y}
\frac1{|g|^{\rho}}\sum_{h|g~|h|\leq T} \mu(h) = \sum_{|h|\leq T}
\frac{\mu(h)}{|h|^{\rho}} \sum_{f ~ |f|\le Y/|h|} \frac1{|f|^\rho},
\\ |H| & \leq \sum_{|h|\leq T} \frac{1}{|h|^{\beta}}
\left|\ze_{Y/|h|}(\rho) \right|.
\end{align*}
Here using $\ze(\rho)=0$, too, the inner expression can be
estimated by Lemma \ref{l:zetaxs}, \eqref{zxrewritten} (first
line) and \eqref{zxrlarge}. We gain
$$
\left|\ze_{Y/|h|}(\rho) \right| \leq
\frac{\kappa\left(\frac{Y}{|h|}\right)^{1-\beta}}{|\rho-1|}+
A\left(\frac{Y}{|h|}\right)^{\theta-\beta}\left(
\frac{|\rho|}{\beta-\theta}+1 \right) \leq \frac{\kappa}{\gamma}
\left(\frac{Y}{|h|}\right)^{1-\beta}+
\frac{4A\gamma}{1-\theta} \left(\frac{Y}{|h|}\right)^{\theta-\beta}.
$$
Applying this in the above estimation of $H$ we are led to
$$
|H|\leq \frac{\kappa Y^{1-\beta}}{\gamma} \sum_{|h|\leq T}
\frac{1}{|h|} + \frac{4A\gamma}{1-\theta}
Y^{\theta-\beta} \sum_{|h|\leq T} \frac{1}{|h|^{\theta}}=
\frac{\kappa Y^{1-\beta}}{\gamma} \ze_T(1) +
\frac{4A\gamma}{1-\theta} \gamma Y^{\theta-\beta}
\ze_T(\theta).
$$
From the second line of \eqref{zxrewritten} and the second part of \eqref{zxrlow} in Lemma
\ref{l:zetaxs} we get
$$
|\ze_T(\theta)|\leq \frac{\kappa T^{1-\theta}}{1-\theta}
+\left|\int_1^T x^{-\theta} d\R(x) \right| \leq \frac{\kappa
T^{1-\theta}}{1-\theta} + A + A \theta\log T \le \frac{A+\kappa}{1-\theta} T^{1-\theta} \log T,
$$
using in the end $T>\exp(\frac{1}{1-\theta})$; and from Lemma \ref{l:zetaxs}, \eqref{zxesti}, second line
we obtain$$
\ze_T(1) \leq \frac{A+\kappa}{1-\theta}\log T.
$$
Inserting these into the last estimation of $H$ leads to
\begin{align*}
|H| & \leq \frac{\kappa(A+\kappa)\log T\, Y^{1-\beta}}{(1-\theta)\gamma} +
\left(\frac{4A(A+\kappa)}{(1-\theta)^2}\right) \gamma \log T \,T^{1-\theta}
Y^{\theta-\beta}\\ & \leq (4A+\kappa)(A+\kappa)\log T \left(
\frac{Y^{1-\beta}}{(1-\theta)\gamma} + \frac{\gamma
Y^{\theta-\beta}T^{1-\theta}} {(1-\theta)^2}\right).
\end{align*}
Assuming now that
\begin{equation}
Y\geq Y_0(T,\theta):=\left(\frac{1}{1-\theta} T^{3-\theta}\right)^{\frac1{1-\theta}},
\end{equation}
the second term is always below the first in view of $|\gamma|\leq
T$. Furthermore, $X \leq \gamma$, so writing in $X:=\log^2 T~
Y^{1-\alpha}$ in place of $\gamma$ we infer
$$
H\leq 8(A+\kappa)^2 \log T \frac{Y^{1-\beta}}{(1-\theta)\gamma} \leq
\frac{8(A+\kappa)^2}{(1-\theta)\log T} Y^{\alpha-\beta} \leq
\frac{8(A+\kappa)^2}{(1-\theta)\log T} < \frac12,
$$
if $T>\exp\left(\frac{16(A+\kappa)^2}{1-\theta}\right)$. Therefore, in \eqref{Hdef} the first
constant $1$ is dominating, and via $|H-1|>1/2$ we obtain
\begin{align}\label{Ysumdef}
\frac14 \cdot Z & \leq \sum_{j=1}^Z |H(\rho_j)-1|^2 = \sum_{j=1}^Z
\left| \sum_{T<|g|<Y} \frac{a(g)}{|g|^{\rho_j}}\right|^2 \\
&\leq \sum_{j=1}^Z \left\{ \sum_{k=0}^{[\log (Y/T)]} \left|
\sum_{e^k T <m \leq \min\{e^{k+1}T, Y\}} \frac{\left(
\sum_{g~:~|g|=m} a(g)\right)}{m^{\rho_j}}\right|\right\}^2 \notag\\
&\leq \left(\log \left(\frac YT\right) +1\right) \sum_{k=0}^{[\log (Y/T)]} \sum_{j=1}^Z \left|
\sum_{M_k<m\leq N_k} \frac{\left( \sum_{g~:~|g|=m}
a(g)\right)}{m^{\rho_j}}\right|^2,\notag
\end{align}
with some appropriate $T\leq M_k < N_k < e M_k$, $N_k\leq Y$. First we
want to estimate the coefficients of the Dirichlet series, so let
us write
\begin{equation}\label{Fm}
F(m):=\sum_{g~:~|g|=m} a(g)
\end{equation}
and use Cauchy's inequality and the trivial upper estimate
$|a(g)|\leq d(g)$ to obtain
\begin{align}\label{Fmestimate}
|F(m)|^2 & \leq \sum_{g~:~|g|=m} 1 \cdot \sum_{g~:~|g|=m} a^2(g) =
G(m) \sum_{g~:~|g|=m} a^2(g) \\ & \leq G(m) \sum_{g~:~|g|=m}
d^2(g) = \sum_{g~:~|g|=m} G(|g|)d^2(g). \notag
\end{align}
Recall that the exponents $\rho_j$ in the inner Dirichlet
polynomials of the last double sum in \eqref{Ysumdef} are all
counted in $N(\alpha,T)$, hence $\beta\geq \al$ and $|\gamma|\leq
T$. By the large sieve type inequality of \cite[Theorem
7.5]{Mont} and writing $M:=M_k, N:=N_k$ here, we are led to
\begin{align}\label{largesieve}
\sum_{j=1}^Z \left| \sum_{M<m\leq N}
\frac{F(m)}{m^{\rho_j}}\right|^2 & \ll (T+N) \log N \sum_{m=M}^N
\frac{F^2(m)}{m^{2\alpha}} \bigg( 1+ \log\frac{\log 2N}{\log 2m} \bigg)\notag \\
& \ll N \log N \sum_{m=M}^N \frac{F^2(m)}{N^{2\al}} = \log N ~
N^{1-2\al} \sum_{n=1}^N F^2(m).
\end{align}
Here we use \eqref{Fmestimate} and apply H\"older's inequality
\emph{while summing over elements of} $\G$ with some exponent $p$
satisfying Condition G. This yields with $q:=\frac{p}{p-1}$
\begin{equation}\label{Fest}
\sum_{n=1}^N F^2(m) \leq \sum_{g~:~|g|\leq N} G(|g|)d^2(g) \leq
\left( \sum_{g~:~|g|\leq N} G(|g|)^p \right)^{1/p} \left(
\sum_{g~:~|g|\leq N} d^{2q}(g) \right)^{1/q}.
\end{equation}
By Lemma \ref{l:dksum}, the second sum is $O(N \log^{C(q)} N)$,
while the first sum is by Condition G $\ll N^{1+\ve}$. Collecting
\eqref{Ysumdef}, \eqref{largesieve} and \eqref{Fest} leads to
\begin{equation}\label{Densityfinal}
Z \ll \sum_{k=0}^{[\log (Y/T)]} \log Y \log N_k~ N_k^{1-2\al} N_k^{(1+\varepsilon)/p}
\left( N_k \log^{C(q)} N_k \right)^{1/q}  \ll Y^{2-2\al+\ve}.
\end{equation}
At last, we can choose $Y$ the smallest possible, that is
$Y:=Y_0$, to get
$$
Z \ll Y_0^{2-2\al+\ve}= \left( \frac{1}{(1-\theta)^{\frac{1}{1-\theta}}}
T^{\frac{3-\theta}{1-\theta}} \right)^{2-2\al+\ve}\ll
T^{\frac{6-2\theta}{1-\theta}\left\{(1-\al)+\ve\right\}}.
$$
To finish the proof we need only to combine this estimate with
\eqref{NX} and \eqref{NXTZ}.
\end{proof}

The result thus shows that, e.g., the functional equation, so
fundamentally present in several approaches, is not necessary for
a density theorem to hold. On the other hand positivity of the
coefficients of the Dirichlet series does play a role here. In
this respect Theorem \ref{th:density} is complement to the similar
density theorem of Kaczorowsky and Perelli \cite{KP}, where the
Selberg class of zeta functions are shown to admit such density
estimates. It would be interesting to analyze what essentially
minimal set of properties, assumed on the class of zeta functions
considered, can still imply the validity of such density estimates.

\section{A Tur\'an type local density theorem for $\ze$-zeros close
to the boundary of the zero-free region} \label{sec:localdensity}

For arbitrary $\tau>0$ and $\theta<\si<1$, we denote the rectangle
$$
Q_{\sigma,h}(\tau):=[\si,1]\times[i(\tau-h),i(\tau+h)].
$$

\begin{theorem}\label{th:locdens} Let $(1+\theta)/2 < b\leq 1$,
$2\leq h$ and $\tau>\max(2h,\tau_0)$ where
$\tau_0=\tau_0(\theta,A,\kappa)$ is a large constant depending on
the given parameters of $\zs$.

Assume that $\zs$ does not vanish in the rectangle $\sigma\geq b$,
$|t-\tau|\leq h$, denoted by $Q_{b,h}(\tau)$, i.e. that
$N(b,\tau-h,\tau+h)=0$. Then for any $\delta$ with
$15\frac{\log \log \log \tau}{\log \log \tau} < \de < \frac{b-\theta}{10}$ we have
$$
M:=N(b-\delta,\tau-\delta,\tau+\delta) \ll \de  \log\tau,
$$
with an implied absolute constant not depending on $\GG$.
\end{theorem}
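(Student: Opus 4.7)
The plan is to apply Lemma \ref{l:borcar} at one carefully placed point $s_0$ just inside the zero-free rectangle, use the hypothesis to sign-fix all the nearby-zero contributions, and deduce the counting estimate by combining a lower bound on the target contribution with an external upper bound on the whole logarithmic derivative $\zeta'/\zeta(s_0)$.

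Take $s_0 := b + c\delta + i\tau$ with a small absolute constant $c > 0$ (e.g. $c = 1$). The hypothesis $\delta < (b-\theta)/10$ gives $a := b+c\delta < b + (b-\theta)/10$, so $(a-\theta)/3 < (b-\theta)/3 < 1/3 \leq h/2$; consequently the disc of radius $(a-\theta)/3$ about $s_0$ in Lemma \ref{l:borcar} lies inside the horizontal band $|t-\tau| < h$. The zero-free assumption on $Q_{b,h}(\tau)$ therefore forces every zero $\rho = \beta+i\gamma$ in the set $S$ of nearby zeros to satisfy $\beta < b < b+c\delta = \mathrm{Re}\,s_0$, so $\mathrm{Re}\,1/(s_0-\rho) > 0$ for each $\rho \in S$, and moreover every target zero belongs to $S$.

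For a target zero $\rho$ we have $\beta \in [b-\delta,b)$ and $|\gamma-\tau| \leq \delta$, giving the explicit lower bound
\[
\mathrm{Re}\,\frac{1}{s_0-\rho} = \frac{b+c\delta-\beta}{(b+c\delta-\beta)^2 + (\tau-\gamma)^2} \geq \frac{c\delta}{((1+c)^2+1)\delta^2} = \frac{c_1}{\delta}
\]
with absolute $c_1 > 0$. Summing over target zeros and using nonnegativity of the remaining terms in $S$,
\[
\frac{c_1 M}{\delta} \leq \sum_{\rho\in S}\mathrm{Re}\,\frac{1}{s_0-\rho}.
\]
Lemma \ref{l:borcar} applied at $s_0$ rewrites the right-hand side as $\mathrm{Re}(\zeta'/\zeta(s_0)) + O(\log\tau/(b-\theta)^2)$, so the desired bound $M \ll \delta\log\tau$ will follow once we establish the absolute estimate $|\zeta'/\zeta(s_0)| \ll \log\tau$ (with implied constant absolute, since $b-\theta > (1-\theta)/2$).

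The hard part is this absolute bound on $\zeta'/\zeta(s_0)$, because $s_0$ is pressed right up against the possible cluster of target zeros. My plan is to use the zero-free rectangle in two further ways: first, choose an auxiliary point $s^{**} := b + (b-\theta)/3 + i\tau$ sitting deeper in $Q_{b,h}(\tau)$, where the Lemma \ref{l:borcar} disc of radius $((b-\theta)/3 + (b-\theta))/3$ contains only zeros with $\beta < b$ whose sum has controlled size, yielding $|\zeta'/\zeta(s^{**})| \ll \log\tau$; second, connect $s^{**}$ to $s_0$ horizontally inside the zero-free region and integrate to get $|\log|\zeta(s_0)| - \log|\zeta(s^{**})|| \ll \log\tau \cdot (b-\theta)^{-1} \cdot ((b-\theta)/3)$, then upgrade the bound on $\mathrm{Re}(\zeta'/\zeta)$ via Borel--Carath\'eodory applied to $\log\zeta$ (well defined and single valued on $Q_{b,h}(\tau)$), using Lemma \ref{l:zkiss} for the ambient upper bound on $|\zeta|$ and a reference value at $2 + i\tau$ for the lower bound. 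The lower bound $\delta \gg \log\log\log\tau/\log\log\tau$ enters precisely here, to guarantee that the cumulative logarithmic slack in these transfer estimates is genuinely absorbed in the final target quantity $\delta\log\tau$, rather than overwhelming it. Combining everything, we get $c_1 M/\delta \ll \log\tau$, i.e. $M \ll \delta \log\tau$ with an absolute implied constant.
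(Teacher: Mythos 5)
Your outer argument coincides with the paper's: the choice $s_0=b+\delta+i\tau$, the application of Lemma \ref{l:borcar}, the observation that the zero-free rectangle makes every term $\Re\,1/(s_0-\rho)$ nonnegative and every target zero lie in $S$ with $\Re\,1/(s_0-\rho)\gg 1/\delta$, and the reduction of the theorem to the single estimate $\Re\,\frac{\ze'}{\ze}(s_0)\ll\log\tau$. The gap is in the part you yourself flag as hard. Your proposed route --- Borel--Carath\'eodory for $\log\zeta$ on a disc inside the zero-free rectangle, an ambient upper bound from Lemma \ref{l:zkiss}, a reference value at $2+i\tau$, then differentiation --- cannot deliver $O(\log\tau)$ at $s_0$. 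Any disc on which $\log\zeta$ is analytic must lie in $\{\sigma\ge b\}$, so $s_0=b+\delta+i\tau$ sits within distance $\delta$ of the boundary of \emph{every} admissible disc; Borel--Carath\'eodory gives $|\log\zeta|\ll\log\tau$ on an inner circle, and the Cauchy estimate for the derivative then costs a factor $1/\operatorname{dist}(s_0,\cdot)\ge 1/\delta$, yielding only $|\ze'/\ze(s_0)|\ll\log\tau/\delta$. Feeding that into your counting inequality gives $M\ll\log\tau$, not $M\ll\delta\log\tau$. (Your intermediate integration step is also circular: to bound $\int_{s^{**}}^{s_0}\ze'/\ze$ by $O(\log\tau)$ you would already need the pointwise control of $\ze'/\ze$ near $s_0$ that you are trying to establish.)

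The missing idea is Hadamard's three-circle theorem, which is the entire content of the paper's Lemma \ref{l:nonvanishsmall}. One sets $g(s)=\log\bigl(\zs/\ze(\tfrac32+i\omega)\bigr)$, obtains $|g|\ll\log\tau$ on a circle of radius $R-\Delta$ by Borel--Carath\'eodory and $|g|\ll 1$ on a small circle around $\tfrac32+i\omega$ (Lemma \ref{l:oneperzeta}), and interpolates: on the circle of radius $R-2\Delta$ one gets $|g|\ll\log\tau\,(\log\log\tau)^{1-k}$ with $\Delta=k\log\log\log\tau/\log\log\tau$. Only then does the Cauchy estimate with the tiny radius $\Delta$ become affordable, giving $|\ze'/\ze|\le\log\tau/(\log\log\tau)^{2}$ at all points with $\sigma\ge b+3\Delta$. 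This is exactly why the theorem's hypothesis reads $\delta>15\log\log\log\tau/\log\log\tau=3\lambda$: the lower bound on $\delta$ is calibrated to the gain from the three-circle step, not (as your sketch suggests) to absorbing logarithmic slack in transfer estimates. Without that interpolation your argument loses an irrecoverable factor $1/\delta$.
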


The original work of Tur\'an, aiming at
"almost getting the density hypothesis", involves the condition
$h\gg \log\tau$. The present analysis reveals that this is not
necessary: we get the same result with any $h>2$ as well. Thus the
result - at least formally - gives something new even for the
Riemann zeta function.

In proving the result we follow closely the original work of Tur\'an,
cf. the Appendix of \cite{TuranD}.

\begin{lemma}\label{l:nonvanishsmall} Let the parameters $b,h,\tau$ be fixed as in the previous theorem, and assume that $\tau$ is large enough. If $N(b,\tau-h,\tau+h)=0$ and $\lambda:= 5 \log\log \log \tau /\log\log \tau$, then for all $s=\si+it$ with $b+3\lambda\le \si\le 1.7$ and $|t-\tau|\le h-1$ the inequality
\begin{equation}\label{zetainnonvanish}
\left|\frac{\ze'}{\ze}(s) \right| \leq \frac{\log \tau}{(\log\log \tau)^2}
\end{equation}
holds true.
\end{lemma}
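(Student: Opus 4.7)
Since $\zeta$ has no zeros in the rectangle $\Omega := [b, 1.7] \times i[\tau-h, \tau+h]$ (by hypothesis for $\sigma \in [b,1]$, and by the Euler product for $\sigma > 1$), and for $\tau$ large the pole $s=1$ lies well outside $\Omega$, a single-valued analytic branch of $f(s) := \log \zeta(s)$ exists on $\Omega$. I would first bound $|f|$ on $\partial\Omega$: on the right edge $\sigma = 1.7$, absolute convergence of the Dirichlet series for $\log \zeta$ together with Lemma \ref{l:oneperzeta} gives $|f(1.7+it)| \ll_{\GG} 1$; on the remaining three edges, Lemma \ref{l:zkiss} yields $|\zeta(s)| \leq C \tau /(b-\theta)$ so $\mathrm{Re}\, f \leq \log \tau + O(1)$, and then a Borel--Carath\'eodory estimate on a disc of radius $\sim \min(1.7-b,h)$ contained in the zero-free region (the hypothesis $h \geq 2$ provides the vertical room) bounds the imaginary part as well, giving $|f(s)| \leq C_1 \log \tau$ with $C_1 = C_1(b,\theta,A,\kappa)$.

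Next a rectangular Phragm\'en--Lindel\"of / Hadamard three-lines argument, using harmonic-measure weights on the four sides of $\Omega$, yields at any interior point $s = \sigma + it$ with $|t - \tau| \leq h - 1$:
\[
|f(\sigma+it)| \leq C_2 (\log \tau)^{(1.7-\sigma)/(1.7-b)},
\]
the top and bottom edges contributing only a bounded factor to $C_2$ because the harmonic measure from $s$ to those sides is at most $e^{-c(h-|t-\tau|)/(1.7-b)} \leq e^{-c/(1.7-b)}$. Cauchy's formula applied to $f'(s) = \zeta'/\zeta(s)$ on the circle $|z-s| = r$ with the optimised choice $r := (1.7-b)/\log\log \tau$ (which for $\tau$ large is smaller than $3\lambda = 15 \log\log\log \tau /\log\log \tau$, keeping the circle inside $\Omega$) then gives, taking the maximum at the circle's leftmost point,
\[
\left|\frac{\zeta'}{\zeta}(s)\right| \leq \frac{C_2 (\log \tau)^{(1.7-\sigma+r)/(1.7-b)} \log\log \tau}{1.7-b}.
\]
Substituting $\sigma - b \geq 3\lambda$ and $r = (1.7-b)/\log\log \tau$ simplifies the right-hand side to $O\bigl(\log \tau /(\log\log \tau)^{15/(1.7-b) - 1}\bigr)$; since $b > (1+\theta)/2 > 1/2$ forces $15/(1.7-b) - 1 > 11$, this is comfortably smaller than the target $\log \tau /(\log\log \tau)^2$ for all $\tau \geq \tau_0$.

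The main obstacle is making the rectangular version of Hadamard's three-lines theorem quantitatively precise: the classical version on an infinite strip involves only the two vertical sides, whereas on the bounded $\Omega$ one must also control the top and bottom edges, where $|f|$ can be as large as $O(\log \tau)$. The safety margin of width $1$ between the interior condition $|t - \tau| \leq h-1$ and the horizontal edges $|t - \tau| = h$, guaranteed by $h \geq 2$, is precisely what keeps the harmonic measure of those two sides bounded; it is this feature that allows $h$ to be merely a constant, in sharp contrast with the $h \gg \log \tau$ required in Tur\'an's original treatment.
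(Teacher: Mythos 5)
There is a genuine gap in the harmonic-measure step, and it sits exactly where the difficulty of allowing $h$ to be a constant resides. In the two-constants inequality $\log|f(s)|\le\sum_j\omega_j(s)\log M_j$ over the four sides of $\Omega$, the top and bottom edges carry the bound $M_{\rm top}=M_{\rm bot}\asymp\log\tau$, and their harmonic measure from an interior point at vertical distance $\ge 1$ is indeed at most $e^{-c/(1.7-b)}$ --- but that is a \emph{fixed positive constant} $\epsilon_0$, not a quantity tending to $0$ with $\tau$. Hence those edges contribute a multiplicative factor $M_{\rm top}^{\epsilon_0}\asymp(\log\tau)^{\epsilon_0}$ to your interpolation bound, not ``a bounded factor to $C_2$''. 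Since the entire gain you extract from the left edge is only $(\log\tau)^{-3\lambda/(1.7-b)}=(\log\log\tau)^{-15/(1.7-b)}$, i.e.\ a power of $\log\log\tau$, the parasitic factor $(\log\tau)^{\epsilon_0}$ overwhelms it and the final estimate degenerates to something of size $(\log\tau)^{1+\epsilon_0}$ divided by powers of $\log\log\tau$, far above the target $\log\tau/(\log\log\tau)^2$. To make the rectangular argument close, the top and bottom edges would have to sit at distance $\gg\log\log\tau$ from $s$ (so that $e^{-\pi d/(1.7-b)}\log\log\tau=O(1)$) --- which is precisely the growth condition on $h$ that the lemma is designed to remove.

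The paper's proof avoids this by abandoning the rectangle in favour of discs: for each $\omega$ with $|\omega-\tau|\le h-1$ it sets $g(s)=\log\bigl(\zeta(s)/\zeta(p+i\omega)\bigr)$ with $p=3/2$ on the disc $|s-(p+i\omega)|\le R:=p-b<1$, which automatically fits inside the zero-free rectangle because $R<1$ (this is the only place where $h\ge2$ is needed). Borel--Carath\'eodory converts $\Re g\le 2\log\omega$ on that circle into $|g|\le(4/\Delta)\log\omega$ on radius $R-\Delta$, and the Hadamard three-circles theorem --- with innermost circle of radius $1/4$ lying in $\Re s>5/4$, where $|g|=O(1)$ by Lemma \ref{l:oneperzeta} --- produces the saving $(\log\omega)^{-\Delta/R'}\asymp(\log\log\omega)^{-k}$ with no extra boundary pieces to control; a final Cauchy estimate of $g'=\zeta'/\zeta$ on radius $R-3\Delta$ finishes. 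Your outer scaffolding (boundary bounds for $\log\zeta$ via Lemmas \ref{l:zkiss} and \ref{l:oneperzeta}, a Borel--Carath\'eodory step, a Cauchy estimate at the end) is sound; it is only the interpolation device in the middle that must be replaced by the radial one.
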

\begin{proof} Let $g(s):=\log\left(\zs/\ze(p+i\omega)\right)$, where $\tau-h+1\le \omega \le \tau+h-1$
and $1<p < 1+b$ is to be determined later. This function is analytic in
the disk $D:=\{s~:~|s-(p+i\omega)|\leq R:=p-b<1\}$, as $\zs$ does not vanish (for
$|t-\tau| \le |\omega-\tau|+R \leq h-1+R \leq h$ and $\Re s \ge p-R=b$).
Furthermore, the real axis, (whence the pole of $\zs$ at $s=1$) is at least as far from any point $s\in D$ as $\omega-R>\tau-h+1-R>\tau-h>h$. Whence $g(s)\ne 0,\infty$ in $D$ and $\log g(s)$ is analytic in $D$.

Clearly, in the center of this disk $g(p+i\omega)=0$. For the upper
estimation of the real part of $g(s)$ along the circle, let us use
\eqref{zsgenlarget} and \eqref{reciprok}, which leads to
\begin{align}\label{Regsest}
\Re g(s) & \leq
\log\left(\frac{\sqrt{2}(A+\kappa)}{b-\theta}\right) + \log(\omega+1) +
\log\left(\frac{p(A+\kappa)}{p-1}\right)
\log\left(\frac{5p(A+\kappa)^2}{(1-\theta)(p-1)}\right),
\end{align}
using also that $b-\theta > \frac{1}{2}(1-\theta)$ and also that $\omega>2$, say.

We will chose
$$
p:=\frac32,\qquad \Delta:=k \frac{\log \log \log \omega}{\log\log \omega}
<0.1, \quad (4 \le k \in\NN).
$$
So let us assume that $\tau$ is large enough, say $\tau > \tau_1:=
30\frac{(A+\kappa)^2}{1-\theta}$: then by $\omega>\tau/2$ in the last
estimate of \eqref{Regsest} the $\log \omega$ term dominates, and $\Re g(s) \leq 2\log \omega$.
Let $R':=R-\Delta < R $: by the Borel-Carathéodory theorem,
$$
\max_{|s-(p+i\omega)|\leq R'} |g(s)| \leq \frac{2R}{\Delta} \, 2\log \omega
< \frac4{\Delta} \log \omega.
$$
Next we apply the three-circle theorem to the circles
$C_1:=\{s~:~|s-(p+i\omega)|\leq R'\}$, $C_2:=\{s~:~|s-(p+i\omega)|\leq R"\}$
with $R":=R-2\Delta$, and $C_3:=\{s~:~|s-(p+i\omega)|\leq q\}$, where
now $q:=(p-1)/2= 1/4$, so on $C_3$ by the estimates of Lemma
\ref{l:oneperzeta} we have
$$
\max_{C_3} |g(s)| \leq \log \left( \frac{(A+\kappa)^2
(p-q)^2}{(p-q-1)^2} \right) = \log \left( 25 (A+\kappa)^2\right).
$$
From the three-circle theorem we obtain
\begin{align*}
\max_{|s-(p+i\omega)|\leq R"}&  \log |g(s)|  \leq \frac{\Delta}{R'} \log
\log \left( 25 (A+\kappa)^2\right) + \frac{R'-\Delta}{R'} \log
\left(\frac{4}{\Delta} \log \omega \right) \\ &= \frac{\Delta}{R'} \log
\left\{\frac{\Delta}{4}\log \left( 25 (A+\kappa)^2\right) \right\}
+\log \frac{4}{\Delta}+ \left(1- \frac{\Delta}{R'} \right)\log
\log \omega.
\end{align*}
We have $\omega>\tau-h+1> \tau/2$, whence for $\tau > \tau_2(A,\kappa)$ the first
expression is negative because $\Delta=\Delta(\omega)\to 0$ when $\omega\to \infty$, entailing that eventually also $(\Delta/4) \log(25(A+\kappa)^2)$ becomes smaller than 1.

Moreover, the fraction $\Delta/R'$ is at
least $\Delta$, since $R'<R<1$. Also, for $\tau > 2\exp(\exp(e))$ we have $\omega>\exp(\exp(e))$, too; in this case we also have $4/\Delta \le 4/k ~\log\log\omega /\log\log\log \omega \le \log \log \omega$.
Therefore, we end up with
\begin{align*}
\max_{|s-(p+i\omega)|\leq R"} \log |g(s)|  & \leq \log \frac{4}{\Delta}+
\left(1- {\Delta}\right)\log \log \omega \\ & \leq \log \log \log \omega + \log
\log \omega - k \log \log \log \omega.
\end{align*}
Choosing $k=4$ we conclude
$$
\max_{|s-(p+i\omega)|\leq R"} |g(s)|  \leq \frac{\log \omega}{(\log\log
\omega)^3}.
$$
Finally, we choose $r:=R-3\Delta=R"-\Delta$, and apply the
standard Cauchy estimate for the value of $|g'(s)|$ for any $s$ in the
disk $|s-(p+i\omega)|\leq r$. We thus obtain
$$
|g'(s)| \leq \frac1{\Delta}\frac{\log \omega}{(\log\log \omega)^3}
=\frac{\log \omega}{4\log\log\log \omega (\log\log \omega)^2} < \frac{\log \tau}{
(\log\log \tau)^2}.
$$
We have the same all over the area of these disks, and as $\tau$
is large enough, and $k=4$ we have $\lambda > \Delta$. Hence for
large enough $\tau$ the horizontal diameter $[b+3\Delta+ i\omega, 2p-b-3\Delta+i\omega]$
of the disk $|s-(p+i\omega)|\leq r$ covers $[b+3\lambda+i\omega,1.7+i\omega]$.
Using this for all choices of $\omega$ with $|\omega-\tau|\leq h-1$,
we obtain that the rectangle $[b+3\lambda,1.7]\times [\tau-h+1,\tau+h-1]i$
is contained in the total area covered by
the union of these disks.

But $g'(s)=(\log \zs)'=\zeta'(s)/{\zs}$, and so at any point $s=\sigma+it$,
$b+3\lambda\le \sigma \le 1.7$, $|t-\tau|\leq h-1$ we thus have
\eqref{zetainnonvanish}.
\end{proof}

\begin{proof}[Proof of Theorem \ref{th:locdens}]
Let $s=\sigma+it \in Q:=Q_{b+3\lambda,h-1}$ (with $\lambda$ as in Lemma
\ref{l:nonvanishsmall}) and consider the number of zeroes $N$ in
the disk $D(s,r)$ with $r:=(\sigma-\theta)/3$. As $\theta<\sigma<1$,
the disc $K:=\{z~:~|z-(1+(1-\theta)+it)| \leq
R:=2-2\theta-2(\sigma-\theta)/3\}$ contains $D(s,r)$, whence by Remark
\ref{r:alsoindisk} (applied to $\sigma$ in place of $b$) the number of
$\ze$-zeroes $N$ in $D(s,r)$ can at most be as large as
$$
N \leq \frac{1-\theta}{\sigma-\theta} \left(A_1+ \log t\right)\le \frac{1-\theta}{b-\theta} \left(A_1+ \log t\right) \le 2\left(A_1+ \log t\right),
$$
with some constant $A_1=A_1(\theta,A,\kappa)$.
Note that the same disk $D(s,r)$ occurs in Lemma \ref{l:borcar}:
hence we obtain now for the (multi)set $S$ of zeroes within
$D(s,r)$ that
\begin{equation}\label{rezlogprime}
\Re \left\{-\frac{\ze'}{\ze}(s)+\sum_{\rho\in S}
\frac{1}{s-\rho}\right\} \leq
\left|\frac{\ze'}{\ze}(s)-\sum_{\rho\in S} \frac{1}{s-\rho}
\right| \ll \left(A_1+\log t \right) \ll \log\tau,
\end{equation}
with $\tau$ (and whence $t$) large enough (e.g. for $\tau>2\exp(A_1)$).
The real parts of the expressions with the zeroes in the sum \eqref{rezlogprime} can
be rewritten for $\rho=\beta+i\gamma$ as
$$
\Re \frac{1}{s-\rho} = \frac{\sigma-\beta}{(\sigma-\beta)^2+(t-\gamma)^2},
$$
hence from \eqref{rezlogprime} we obtain with a suitably large
absolute constant $C$ and for $\tau$ (and hence $t$) large enough
\begin{equation}\label{resumma}
\sum_{\rho\in S}\frac{\sigma-\beta}{(\sigma-\beta)^2+(t-\gamma)^2} < \Re
\frac{\ze'}{\ze}(s) + C \log \tau .
\end{equation}
We choose now $s:=b+\delta+i\tau$, i.e. $\sigma=b+\delta$ and
$t:=\tau$. By assumption, we have $3\lambda<\delta$. That is,
$\sigma+it\in [b+3\lambda,1.7]\times [\tau-h+1,\tau+h-1]i$ and the
estimations of the previous Lemma \ref{l:nonvanishsmall} can be
applied to bound the arising $\ze'/\ze(\sigma+it)$ in the right hand
side. This yields
\begin{equation}\label{resummaplus}
\sum_{\rho\in S}\frac{\sigma-\beta}{(\sigma-\beta)^2+(\tau-\gamma)^2} < \frac{\log\tau}{(\log\log\tau)^2} + C \log \tau \le 2C \log \tau,
\end{equation}
the second term dominating.

It is clear that the terms in the sum are nonnegative, for we have assumed that $Q_{b,h}(\tau)$ is zero-free, the radius of the disk $D(s,r)$ is less than 1, and $|t-\tau|<h-1$. Therefore, any term can be dropped while preserving the inequality.

Observe that $Q_{b-\delta,\delta}(\tau)\cap S \subset [b-\de,b]\times [\tau-\de,\tau+\de]i \subset D(s,r)$, so that we can restrict the sum for summing over zeroes in $Q_{b-\de,\de}(\tau)$. Therefore,
$$
\sum_{\rho\in Q_{b-\delta,\delta}} \frac{\sigma-\beta}{(\sigma-\beta)^2+(\tau-\gamma)^2} < 2C \log \tau  .
$$
Now each term on the left hand side is at least
$2\delta/5 \delta^2$, hence we arrive at
$$
M \frac1{\delta} \le 5C\log \tau.
$$
The proof of the theorem concludes.
\end{proof}

\section{Clustering of zeroes in the vicinity of the
1-line}\label{sec:cluster}

In \cite{DMV}, Theorem 2 the authors prove that the zeroes
$\rho=\beta+i\gamma$ of the Beurling zeta function, close to
the one-line in the sense that $\beta>1-c/\sqrt{\log \gamma}$,
show a phenomenon of clustering: they do not occur in
isolation, but instead once a zero $\rho_0$ occurs, there must be further
ones in the union of some small discs $D(1+i\gamma_0,\lambda)$
and $D(1+i2\gamma_0,\lambda)$ around $1+i\gamma_0$ and
$1+i2\gamma_0$.

This theorem is itself a sharpening of what was proved in the average
by Montgomery in his monograph \cite{Mont} for the case of the Riemann
zeta function. Yet there is a less quoted, nevertheless sharper result,
due to Ramachandra \cite{Ram}, which provides similar clustering with closeness
relaxed to $1/\log \log \gamma$ only and still localizes to the small discs
$D(1+i\gamma_0,\lambda)$ and $D(1+i2\gamma_0,\lambda)$ around $1+i\gamma_0$
and $1+i2\gamma_0$.

It is worthy to work out the result here not only for sake of
generality but also to give a somewhat more transparent deduction
of the result. Indeed, Ramachandra uses a positive trigonometric
polynomial with obscure\footnote{It seems that the reasons lie in
the later, more general application of his method to clustering
around $1+i\gamma_1$ and $1+i\gamma_2$, when
two close zeroes are known at hight $\gamma_1$ and $\gamma_2$,
respectively.} coefficients (like $10^8$). Here we analyze the method and show
that the most common $3+4\cos\theta+\cos 2\theta$ does the job as
well.

Note that the further paper of Balasubramanian and Ramachandra \cite{BaRa},
claiming to achieve the even nicer localization of clustering right
in $D(1+i\gamma_0,\lambda)$, contains a fatal error\footnote{Indeed,
summarizing the previous Lemmas 3 and 5 in Lemma 6 on page 11, the
authors neglect a term $-2X^{1-s}\Gamma(\frac{1-s}{2})$, which is a
main term and destroys everything.}, unfortunately.

\begin{theorem}\label{th:Ramachandra} Assume $\zeta(\rho_0)=0$
with $\rho_0=\beta_0+i\gamma_0$, $\beta_0>\max(\theta,0.999)$, $\gamma_0\ge 100$, and
$1-\beta_0< \frac{1-\theta}{40\log\log\gamma_0}$.
Further, let the parameter $0<\lambda\le \frac23 (1-\theta)$ be arbitrary.

Then there exists an effective constant $A_{10}:=A_{10}(\theta,A,\kappa)$, depending only on the
parameters of the Beurling zeta function given in Axiom A, so that
with any value of the further parameter $Y$ satisfying $Y> \max\left(A_{10},\frac{4}{1-\theta}\log\log \gamma_0\right)$
we have
\begin{equation}\label{eq:Ramachandrastatement}
\sum_{\rho\in D(1+i\gamma_0,\lambda)\setminus \{\rho_0\}} e^{-Y(1-\beta)} +
\sum_{\rho\in D(1+i2\gamma_0,\lambda)} e^{-Y(1-\beta)}
\gg \frac{\lambda}{Y(1-\beta_0)}-c_0,
\end{equation}
with $c_0$, as well as the implied constant in $\gg$ being explicit absolute constants.
\end{theorem}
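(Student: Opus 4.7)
The plan is to adapt Ramachandra's scheme, using the classical positive trigonometric polynomial $P(\theta)=3+4\cos\theta+\cos 2\theta \geq 0$. Fix a smooth nonnegative window $w\geq 0$ of width $\lambda$ centred at the origin, and introduce the three smoothed twisted von Mangoldt sums
\begin{equation*}
T_k := \sum_{g\in\G}\Lambda(g)\,w(\log|g|-Y)\,|g|^{-ik\gamma_0}, \qquad k=0,1,2.
\end{equation*}
Applying $P(\gamma_0\log|g|)\geq 0$ term by term at once yields the starting non-negativity $3T_0+4\Re T_1+\Re T_2\geq 0$. The second ingredient is to evaluate each $T_k$ by Mellin/Perron inversion together with the Riemann--von Mangoldt formula of Lemma~\ref{l:vonMangoldt}: shifting the integration contour past the pole of $\zeta$ at $1$ and its nontrivial zeros, one obtains an expansion of the shape
\begin{equation*}
T_k = e^{(1-ik\gamma_0)Y}W(1-ik\gamma_0) - \sum_\rho e^{(\rho-ik\gamma_0)Y}W(\rho-ik\gamma_0) + E_k,
\end{equation*}
in which $W(z):=\int w(u)e^{zu}\,du$ is the two-sided Laplace transform of $w$, the sum runs over the nontrivial zeros $\rho=\beta+i\gamma$ of $\zeta$, and $E_k$ is an error term, to be controlled via the zero density estimates of \S\ref{sec:basics} together with the hypothesis $Y > A_{10}$. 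After dividing through by $e^Y$, the zero contributions carry exactly the weight $|W(\rho-ik\gamma_0)|\cdot e^{-Y(1-\beta)}$ appearing in \eqref{eq:Ramachandrastatement}.

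Substituting these expansions into the positivity and rearranging, I arrive at a signed inequality of schematic form
\begin{equation*}
\Re\sum_\rho e^{-Y(1-\beta)}\!\left[3W(\rho)e^{iY\gamma} + 4W(\rho-i\gamma_0)e^{iY(\gamma-\gamma_0)} + W(\rho-2i\gamma_0)e^{iY(\gamma-2\gamma_0)}\right]\leq M(Y,\gamma_0)+O(1),
\end{equation*}
where $M(Y,\gamma_0)$ collects the main-term contributions from the simple pole of $\zeta$ at $s=1$. Since $\lambda\gamma_0$ is large in our regime ($\gamma_0\geq 100$ and $\lambda$ of constant order), the $k=1,2$ main-term components of $M$ decay in $\gamma_0$ by standard Fourier-type considerations, so that $M(Y,\gamma_0)$ is of bounded size. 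The decisive observation is that the single zero $\rho=\rho_0$ contributes on the left-hand side a real, positive quantity close to $4W(\beta_0)\,e^{-Y(1-\beta_0)}$ (for $k=1$ the phase $\gamma-\gamma_0$ vanishes and $W(\beta_0)$ is a positive real number). A careful quantitative lower bound for this $\rho_0$-term, combined with an explicit upper bound for $M$, produces the desired factor $\lambda/(Y(1-\beta_0))$ on the right of \eqref{eq:Ramachandrastatement}. To preserve the inequality, the zeros in $D(1+i\gamma_0,\lambda)\setminus\{\rho_0\}$ together with those in $D(1+2i\gamma_0,\lambda)$ must then contribute at least this much (up to an absolute constant $c_0$ absorbing the bounded main term and error), while zeros lying outside both discs are negligible thanks to the polynomial decay of $W$ in the imaginary direction.

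The main obstacle lies in the design of $w$ and the simultaneous reconciliation of several competing demands on its Laplace transform $W$: first, $|W|$ should decay polynomially in the imaginary direction, so that zeros $\rho$ far from $1+ik\gamma_0$ do not spoil the inequality; second, the main-term values $W(1-ik\gamma_0)$ for $k=1,2$ must be small enough to be absorbed into $c_0$; third, the $\rho_0$-contribution must be quantifiable sharply enough to yield the explicit factor $\lambda/(Y(1-\beta_0))$. The author's streamlining of Ramachandra's argument is precisely the observation that the elementary polynomial $3+4\cos\theta+\cos 2\theta$ already suffices for all three tasks, sidestepping his original, more baroque choice with large numerical coefficients. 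A secondary obstacle is the quantitative control of the Perron-type error $E_k$ in the Beurling setting, where the available truncation formula is Lemma~\ref{l:vonMangoldt}; here the hypotheses $Y > A_{10}$ and $1-\beta_0 < (1-\theta)/(40\log\log\gamma_0)$ guarantee that the resulting errors are swallowed into $c_0$ as well.
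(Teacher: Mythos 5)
Your outline reproduces the skeleton of the paper's argument -- the positive polynomial $3+4\cos u+\cos 2u$, the explicit-formula expansion of the three twisted sums, the competition between the pole of $\zeta$ at $1$ (coefficient $3$) and the known zero $\rho_0$ entering the $k=1$ term (coefficient $4$), and localization via decay of the test-function transform -- but two of your concrete choices fail quantitatively, and they are exactly the points where the paper's machinery does the real work. The first is the size of the main term. After dividing by $e^{Y}$, your pole term is $3W(1)$ and your $\rho_0$-term is $4e^{-Y(1-\beta_0)}W(\beta_0)$; since $W(\beta_0)\approx W(1)$ and $e^{-Y(1-\beta_0)}\asymp 1$ in the relevant regime, their difference is $\asymp W(1)$, i.e.\ a \emph{bounded} quantity under any normalization of $w$. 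The theorem demands a lower bound $\gg \lambda/(Y(1-\beta_0))$, which is unbounded as $1-\beta_0\to 0$. The paper gets this amplification by keeping an auxiliary real variable $s$ and taking $s=1+10(1-\beta_0)$: the pole and $\rho_0$ then contribute $-\tfrac{6}{s-1}$ and $+\tfrac{8}{s-\beta_0}e^{-(1-\beta_0)\log x}$, both of size $\tfrac{1}{1-\beta_0}$, and the numerical inequality $\tfrac{8}{1.1}e^{-0.1}>6$ leaves a surplus $\ge\tfrac{1}{s-1}=\tfrac{1}{10(1-\beta_0)}$, which after division by $\log x\asymp Y$ yields the claimed $\lambda/(Y(1-\beta_0))$. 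A window pinned at $\log|g|\approx Y$ with no such variable $s$ never sees the pole at full strength; you would only prove that the remaining zeros contribute $\gg 1-c_0$, which is vacuous for an absolute constant $c_0$.

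The second gap is the localization. A window of width $\lambda$ in $u=\log|g|$ has transform $W(\sigma+it)$ essentially constant for $|t|\lesssim 1/\lambda$ and decaying only beyond that scale -- the scaling is backwards for resolving the discs $D(1+ij\gamma_0,\lambda)$. Consequently the $\gg\log\gamma_0$ zeros with $\lambda<|\rho-1-ij\gamma_0|\le 1$ each receive a weight comparable to $W(1)$ and swamp the inequality; and even a single smooth window of the correct width $\gtrsim 1/\lambda$ gives decay insufficient against the $O(\log^2\gamma_0)$ zeros with $|\gamma-j\gamma_0|<Q$. The paper's substitute is twofold: the kernel $x^{w}\Gamma(w/2)$ supplies $e^{-\pi|v|/4}$ decay that disposes of zeros with $|\gamma-j\gamma_0|\ge Q=\lceil 3\log\gamma_0\rceil$, and -- this is the missing idea -- an $m$-fold average over $\log x=Y_0+u_1+\dots+u_m$ with $u_j\in[0,d]$, $d=2e^{2}/\lambda$ and $m=[Y_0]$, which multiplies the contribution of every zero outside the $\lambda$-discs by $(2/(d\lambda))^{m+1}=e^{-2(m+1)}\le\log^{-2}\gamma_0$. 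This averaging is effectively an $m$-fold convolution whose transform decays \emph{geometrically} already at scale $\lambda$, something no fixed window achieves, and it is also precisely where the hypothesis $Y\ge\frac{4}{1-\theta}\log\log\gamma_0$ is used. Your reliance on "polynomial decay of $W$ in the imaginary direction" cannot close either of these steps.
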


\begin{proof} We will work with the kernel function
$$
K(w,x):=x^w\Gamma\left(\frac{w}{2}\right),
$$
satisfying the integral formula
$$
d(x,\nu):=\frac{1}{2\pi i} \int\limits_{(2)} \frac{1}{\nu^w}
x^w\Gamma\left(\frac{w}{2}\right) dw = 2 \exp \left(
-\left(\frac{\nu}{x}\right)^2\right) \qquad \left( \nu\geq 1,
~\nu\in \RR\right),
$$
where, as usual, $\int_{(2)}=\int_{2-i\infty}^{2+i\infty}$. At the
outset we let $s\in \CC$ be arbitrary with $\Re s>1$. We put
$$
f(s):=f_{\gamma_0}^B(s) f_{2\gamma_0}^C(s)\ze^{-2D}(s), \qquad
\textrm{with}\quad
f_{\al}(s):=\zeta^2(s)\zeta(s-i\al)\zeta(s+i\al),
$$
so that
\begin{align*}
\frac{f'}{f}(s)&= B \frac{f'_{\gamma_0}}{f_{\gamma_0}}(s)
+C \frac{f'_{2\gamma_0}}{f_{2\gamma_0}}(s) - 2D\frac{\ze'}{\ze} (s) =
2(B+C-D) \frac{\ze'}{\ze}(s) \\
&\qquad\qquad+ B \left(\frac{\ze'}{\ze} (s-i\gamma_0)+ \frac{\ze'}{\ze}
(s+i\gamma_0)\right) + C \left(\frac{\ze'}{\ze} (s-2i\gamma_0)+
\frac{\ze'}{\ze} (s+i2\gamma_0)\right)
\\ &= - 2 \sum_{g\in \G}
\frac{\Lambda(g)}{|g|^s} \left( (B+C-D)+B \Re |g|^{i\gamma_0}+
C\Re |g|^{i2\gamma_0} \right)= - 2 \sum_{g\in \G}
\frac{\Lambda(g)P(\gamma_0 \log|g|)}{|g|^s}.
\end{align*}
with the choice of the constant parameters $B,C,D>0$ such that
\begin{equation}\label{Pdef}
P(u):= (B+C-D) + B \cos u + C \cos(2u) \geq 0 ~~(\forall u\in\RR).
\end{equation}
For any $s$ in the right halfplane $\Re s>1$ we have
$$
F(s,x):=- \sum_{g\in \G}
\frac{\Lambda(g)P(\gamma_0 \log |g|) d(x,|g|)}{|g|^s}=
\frac{1}{2\pi i} \int_{(2)} \frac12 \frac{f'}{f}(s+w) K(w,x) dw.
$$
Similarly to the von Mangoldt type formula of Lemma \ref{l:vonMangoldt}--whose
proof is detailed in \cite{Rev-MP}--we move the line of integration to
the left. For that, we fix a parameter $b$,
choose $\A:=\{-2\gamma_0, -\gamma_0, 0, \gamma_0, 2\gamma_0\}$, and consider the curve
$\Gamma:=\Gamma^{\A}_b$, as constructed in Lemma \ref{l:path-translates}. It is
appropriate here to fix our parameters $a, b$, frequently used below. We choose
$a:=\theta +(1-\theta)/3$, $b:=\theta+2(1-\theta)/3$, so that all
the constants $a-\theta$, $1-b$, $b-\theta$ are equivalent to $1-\theta$,
that is, they are all estimated both from below and from above by
a positive, finite constant multiple of $1-\theta$.
Correspondingly, in the following the constants $A_1, A_2, \ldots$
will be chosen to depend on $\theta, A$ and $\kappa$ only, but not on $a,b$.
Also note that with this choice
we have $a=\frac{b+\theta}{2}$, matching with the construction
of Lemma \ref{l:path-translates}.

Next, we move the part $[2-it_k,2+it_k]$ of the contour of integration to the left
to the corresponding part of $\Gamma-s$ along the horizontal
lines $t=\pm t_k$. Now it is easy to see that the "bridges"
along any horizontal segments $t=\pm t_k$ give $o(1)$ contribution as $t_k \to \infty$,
hence by letting $t_k\to\infty$ the whole line $\Re w=2$ of integration can
be moved to $\Gamma-s$ (so that $w+s \in \Gamma$).

Observe that for real $s$ by construction in Lemma \ref{l:path-translates} the broken line $\Gamma-s$
does not contain any singularities of the integrand, i.e. $\Gamma$ does not
contain singularities of $ f'/f $. In fact, we will only consider
parameter values $1<s<1+1/100$ in the proof. Note that for real $s$
we can also write $F(s,x)=\Re F(s,x)$ and
$$
\frac12 \Re \frac{f'}{f}(s) = \frac12 \frac{f'}{f}(s) = E \frac{\ze'}{\ze}(s)+ B \Re
\frac{\ze'}{\ze} (s-i\gamma_0) + C \Re \frac{\ze'}{\ze} (s-2i\gamma_0)
\quad(E:=B+C-D).
$$

We thus find, using again the notation $\Z(L)$ for the set of
$\zeta$-zeroes to the right of the curve $L$, that
\begin{align*}
F(s,x)&=\frac{1}{2\pi i} \int\limits_{\Gamma-s} \frac12
\frac{f'}{f}(s+w)K(w,x) dw + \frac12 \sum_{r\atop\sigma_0-s <r<2}
\textrm{Res}\big[ K(w,x):w=r \big] \frac{f'}{f}(s+r) \\ & + E
\left\{ - K(1-s,x) + \sum_{\rho\in\Z(\Gamma)} K(\rho-s,x) \right\}
\\& + B \Re \left\{-K(1-s+i\gamma_0,x) +
\sum_{\rho\in\Z(\Gamma-i\gamma_0)} K(\rho-s+i\gamma_0,x) \right\}
\\& + C \Re \left\{-K(1-s+i2\gamma_0,x) +
\sum_{\rho\in\Z(\Gamma-i2\gamma_0)} K(\rho-s+i2\gamma_0,x)
\right\},
\end{align*}
where $\sigma_0$ is the abscissa of the line segment of $\Gamma$
crossing the real axis. The only singularity $r$ of $K(w,x)$ in
$[\sigma_0-s,2]$ is at $r=0$ (for $\Gamma(w/2)$ has singularities
only at $0$ and at the negative even integers, whilst
$-1.01<\si_0-s$ excludes the occurrence of the latter),
so the first sum reduces to $2(f'/f)(s)$ in view
of the residuum of $x^w\Gamma(w/2)$ at $w=0$ being exactly 2.
Multiplying by the constant factor $1/2$ in front of it and then
subtracting from the formula the resulting term $\frac{f'}{f}(s)$,
we arrive at
\begin{align}\label{positiveformula}
0 & \leq \sum_{g\in \G} \frac{\Lambda(g)P(\gamma_0 \log |g|)
\left(2-2\exp(-(|g|/x)^2)\right) }{|g|^s} = F(s,x)-\frac{f'}{f}(s)
\notag \\ & = I + E S_0 + B \Re  S_1 + C \Re S_2 ,
\end{align}
where
$$
I:=\frac{1}{2\pi i}
\int\limits_{\Gamma-s} \frac12 \frac{f'}{f}(s+w)K(w,x) dw ,
$$
and
$$
S_j:= -K(1-s+ij\gamma_0,x) +
\sum_{\rho\in\Z(\Gamma-ij\gamma_0)} K(\rho-s+ij\gamma_0,x)\qquad (j=0,1,2).
$$
Using that $s$ is chosen to be real, and that we only need $\Re
S_j$, we can also write $\Re S_j=\Re \overline{S_j}$, where now
$$
\overline{S_j}= -K(1-s-ij\gamma_0,x) +
\sum_{\rho\in\Z(\Gamma+ij\gamma_0)} K(\rho-s-ij\gamma_0,x)\qquad (j=0,1,2).
$$

Recall that the curve $\Gamma$ is constructed in Lemma \ref{l:path-translates} so that
the real part of points on the curve lie between $a:=\frac{b+\theta}{2}$ and $b$.
Therefore, $-1-1/100 \leq a-s \leq \Re w \leq b-s <b-1<0$ bounds occurring values of
$w=u+iv \in \Gamma-s$ within the strip $-1.01\le u=\Re w \le 1-b$.
Stirling's formula for $w=u+iv$ in this strip gives
$|\Gamma(w/2)|\ll \frac1{1-b} (1+|v|)^{u/2-1/4} e^{-\pi v/4}$.

Next we estimate the logarithmic derivatives $\frac{\zeta'}{\zeta}(z+ij\gamma_0)$ for $j=-2,-1,0,1,2$, for arbitrary points $z\in \Gamma$. Using Lemma \ref{l:zzpongamma-c} and $\gamma_0\ge 100$, we are led to
\begin{align*}
\left| \frac{\ze'}{\ze}(s+ij\gamma_0) \right| & \le 5
\frac{1-\theta}{(b-\theta)^{3}} \left(6 \log(|\Im z|+2\gamma_0+5)+60\log(A+\kappa) + 40 \log\frac1{b-\theta}+ 140\right)^2
\\ &\le A_1 \log^2(\gamma_0+|\Im z|) \qquad (A_1:=A_1(\theta,A,\kappa)\quad\textrm{a constant}).
\end{align*}

Applying this bound in estimating the values of $(\zeta'/\zeta)(s+w+ij\gamma_0)$ for $w\in \Gamma-s$ and combining it with the estimation coming from Stirling's formula for $\Gamma(w/2)$ we obtain
\begin{align}\label{I}
I&\ll \int_0^\infty A_1\log^2(\gamma_0+v)x^{b-s} \frac{(1+|v|)^{(b-s)/2-1/4}}{1-b} e^{-v\pi/4} dv
\ll A_2 x^{b-s} \log^2\gamma_0,
\end{align}
with $A_2:=A_2(\theta,A,\kappa)$ a constant.

Next, we estimate the contribution of zeroes in $\overline{S_j}$,
having imaginary part farther from $j\gamma_0$ than $Q$, with a
parameter $Q>1$ to be chosen later. One summand is at most
$$
|K(\beta-s + i(\gamma-j\gamma_0),x)|\ll x^{1-s}
|\gamma-j\gamma_0|^{(1-s)/2-1/4} e^{-\pi|\gamma-j\gamma_0|/4}
\leq x^{1-s}e^{-\pi|\gamma-j\gamma_0|/4},
$$
since for $-1-1/100<\Re w < 1-s $ and $|\Im w|>Q\geq 1$ we have a
uniform Stirling bound.
For the corresponding sums we thus have, by an application of Lemma
\ref{l:Littlewood}
\begin{align*}
\left|\sum_{\rho\in\Z(\Gamma+ij\gamma_0) \atop
|\gamma-j\gamma_0|\geq Q} K(\rho-s-ij\gamma_0,x) \right| & \ll
x^{1-s} \int_{|t-j\gamma_0|>Q} e^{-\pi|t-j\gamma_0|/4} dN(a,t)
\notag \\ & = x^{1-s} \int_Q^{\infty} e^{-\pi u/4}
d\left(N(a,u+j\gamma_0)+N(a,j\gamma_0-u)\right) \notag \\ & \ll
x^{1-s} \left\{ N(a,j\gamma_0+Q)e^{-\pi Q/4}  + \int_Q^{\infty}
e^{-\pi u/4} N(a,u+j\gamma_0)du \right\} \notag \\ & \ll A_3
x^{1-s} (Q+\gamma_0)^2 e^{-\pi Q/4},
\end{align*}
where as before, $A_3$ is a constant depending on the parameters
$\kappa, A$ and $\theta$ only. For $j=0$ we
have for the small zeroes of $S_0$ the estimate
$$
\left|\sum_{\rho\in\Z(\Gamma) \atop |\gamma|<1}
K(\rho-s,x) \right| \leq \frac{A+\kappa}{\kappa(1-\theta)} N(a,1)
x^{1-s} \le A_4 x^{1-s} \qquad (A_4:=A_4(\kappa, A, \theta)~).
$$
according to the separation of zeroes of $\zs$ from $1$, given in
Lemma \ref{l:zkiss}, \eqref{polenozero} and using Lemma \ref{l:Jensen}
\eqref{zerosinH-smallt}, too. Similarly, for the rest of $S_0$ we have,
referring to the uniform Stirling bound here,
$$
\left|\sum_{\rho\in\Z(\Gamma) \atop 1\leq |\gamma|<Q}
K(\rho-s,x) \right| \leq \sum_{q=1}^Q N(a,q,q+1)
x^{1-s} e^{-q\pi/4} \ll A_5 x^{1-s} \quad (A_5:=A_5(\kappa, A, \theta)).
$$
Note that $1-s-i\gamma_0$ and $1-s-2i\gamma_0$ are separated by at
least $\gamma_0>1$ from the singularity of $\Gamma(w/2)$ at $w=0$, whence
we have $|K(1-s-ij\gamma_0)|\ll x^{1-s}$.

Choosing e.g. $Q:=\lceil 3 \log \gamma_0\rceil $ the above estimates applied in \eqref{positiveformula} furnish\footnote{From now on we drop writing $A_k:=A_k(\theta,A,\kappa)$, but all $A_k$ are understood as constants depending on these parameters of Axiom A only.}
\begin{equation}\label{EBCA9}
0\leq - E K(1-s,x) + B S_1' + C S_2' + A_6 \left(x^{b-s} \log^2 \gamma_0+x^{1-s}\right),
\end{equation}
where for $j=1,2$ we denote
\begin{equation}\label{nonnegativereducedsum}
Sj':=\Re \sum_{\rho\in\Z(\Gamma+ij\gamma_0) \atop |\gamma-j\gamma_0| < Q}
K(\rho-s-ij\gamma_0,x) .
\end{equation}

Assume now that $0<\lambda<\lambda_0:=\lambda_0(a):=1-a=\frac23 (1-\theta)$
(so that $\lambda<\lambda_0$ is exactly as is postulated in the theorem).
With such a constant parameter value $\lambda$ we have $1-\lambda>a$, whence
according to \eqref{zerosbetweenone} of Lemma \ref{c:zerosinrange}
the number of zeroes in the discs
$D(1+ij\gamma_0,\lambda)$ ($j=1,2$) cannot exceed
$A_7+\log(2+j\gamma_0)$. Similarly, the number of
summands in both sums $S'_j$ ($j=1,2$) are at most
\begin{equation}\label{numberofterms}
N(a,j\gamma_0-Q,j\gamma_0+Q)\ll  Q
\left(A_8+\log(Q+2\gamma_0)\right) \ll
A_9\log^2 \gamma_0,
\end{equation}
according to Lemma \ref{c:zerosinrange} in view of the choice
of $Q:=[3\log \gamma_0]$.


Since $b=1-(1-\theta)/3$, we have
$x^{b-s}=x^{-(1-\theta)/3+1-s}$. We will now assume that
\begin{equation}\label{X0condition}
x\geq X^*\geq \exp\left( \frac{3}{1-\theta}\log(2\log\gamma_0) \right),
\end{equation}
so that $x^{b-s} \log^2 \gamma_0 \leq x^{1-s}$.
Using this in the above \eqref{EBCA9}, we have for $x\ge X^*$
\begin{equation}\label{intermediate}
0\leq - E K(1-s,x) + B K(\rho_0-s-i\gamma_0,x) + B S_1"
+ C S_2' + 2 A_6 x^{1-s},\notag
\end{equation}
where $S_1"$ is essentially $S_1'$, save the contribution of the
known zero at $\rho_0$.

Next we separate the contribution of the zeroes inside ${\mathcal
D}:= D(1+i\gamma_0,\lambda) \cup D(1+i2\gamma_0,\lambda)$. Here we
use the estimate, proved in \cite{Mont} and also in \cite{Ram},
that for any $z$ subject to $-3/2<\Re z < 0$ we have uniformly
$\Re x^z \Gamma(z/2) \ll x^{\Re z} \log x$. In view of this, and
our choice of $1<s<1.01$, the contribution of these close zeroes
is at most a constant times
$$
S^{\star}:=\sum_{\rho\in\DD\setminus\{\rho_0\}} x^{\beta-s}\log x.
$$
Collecting the above estimates we obtain
$$
E K(1-s,x) - B K(\beta_0-s,x) - B S_1^{\star}
- C S_2^{\star} - 2 A_6 x^{1-s} \ll S^{\star},
$$
where now the sums $S_j^{\star}$ denote the same as $S_j'$, but the
summands chosen only from
$$
\{\rho\in\Z(\Gamma-ij\gamma_0)~:~|\gamma-j\gamma_0|<
Q\}\setminus D(1+ij\gamma_0,\lambda).
$$
Note that all terms in the
sums $S^{\star}_j$ ($j=1,2$) have the form $K(z,x)$ with
$|z|>\lambda$.

Now it may be clear that any choice of the coefficients $B,C,D$,
hence $E$, which satisfy $E<B$, suffices. So we choose $B:=4$,
$C:=1$ and $D:=4$ to obtain $E=3$ (and thus $P(u)=3+4\cos u + \cos
(2u)$). After multiplying by $x^{s-1}$ we are led to
\begin{equation}\label{closetointegral}
3\Gamma\left(\frac{1-s}{2}\right)-4x^{\beta_0-1}\Gamma\left(\frac{\beta_0-s}{2}\right)
-4 S_1^{\star\star} -  S_2^{\star\star}-2 A_6
\ll\sum_{\rho\in\DD\setminus\{\rho_0\}} x^{\beta-1}\log x,
\end{equation}
where
\begin{equation}\label{Sdublestar}
Sj^{\star\star}:=\Re   \sum_{\rho\in\Z(\Gamma+ij\gamma_0)
\atop |\gamma-j\gamma_0| < Q,~|\rho-ij\gamma_0|>\lambda}
x^{\rho-ij\gamma_0-1}\Gamma\left(\frac{\rho-s-ij\gamma_0}{2}\right).
\end{equation}
We want to chose our parameters $x$ and $y:=\log x$ so that we will have
\begin{equation}\label{parameters}
\frac{4}{1-\theta} \log\log \gamma_0 < y: = \log x \leq \frac{0.1}{1-\beta_0},
\end{equation}
meeting the requirement in \eqref{X0condition}, too, and further that it will hold
\begin{equation}\label{sbeta}
(1<)~1+10(1-\beta_0)< s < 1.01.
\end{equation}
Note that in order to have a nonempty interval for $y$ in the first
condition it suffices to have $1-\beta_0< \frac{1-\theta}{40\log\log\gamma_0}$ which is
assumed among the conditions of the statement of Theorem \ref{th:Ramachandra}.
Further, to have a nonempty interval for $s$ in the
second condition it suffices to have $1-\beta_0 <0.001$, also
guaranteed by the condition $\beta_0>0.999$ of Theorem \ref{th:Ramachandra}.

The above conditions allow an interval for the choice of $x$. We
will now take a subinterval, so that $\frac{4}{1-\theta}\log\log \gamma_0\leq Y_0:=\log X_0<
Y_1:=\log X_1 \leq 0.1/(1-\beta_0)$, and consider admissible values
$y:=\log x = Y_0+u_1+\dots+u_m$ with $0\leq u_j\leq d$, where
$d:= 2 e^2/\lambda$ is a constant, now depending also on $\lambda$,
and $m\in \NN$ is an integer parameter. Moreover, we want to choose $m:=[Y_0]$. Then
the above parametrization of $y=\log x$ will run between $Y_0$ and
$Y_0+md$, whence $Y_1=Y_0+md\in (Y_0+d(Y_0-1),(1+d)Y_0]$ must be below the
bound $0.1/(1-\beta_0)$ for $\log X_1$. We thus
require $(1+d)Y_0 \leq 0.1/(1-\beta_0)$. Whenever such a $Y_0\ge \frac{4}{1-\theta}\log\log \gamma_0$ is
chosen, the corresponding $m$ and $Y_1=Y_0+md$ satisfy the
necessary bounds, hence the interval $[X_0,X_1]$ is admissible.
So from this point on we can proceed with our argument only in case
\begin{equation}\label{yzeroint}
\frac{4}{1-\theta}\log\log \gamma_0  \leq Y_0 \le \frac{1}{1+d} \frac{0.1}{1-\beta_0}.
\end{equation}
The left hand side inequality is assumed (with $Y$ written in place of $Y_0$ here)
among the conditions of Theorem \ref{th:Ramachandra}.
Now, if the right hand side inequality here fails to hold, then we necessarily have
$Y_0 > \frac{1}{1+d} \frac{0.1}{1-\beta_0} = \frac{\lambda}{(\lambda+e^2)10(1-\beta_0)}$,
hence $Y_0(1-\beta_0) > \frac{\lambda}{160}$, so that for any constant $c_0\ge 160 $ in the statement
\eqref{eq:Ramachandrastatement} the right hand side becomes negative.
Therefore, for such parameter values $Y=Y_0$ there remains nothing to prove, and
it remains to derive the assertion in case \eqref{yzeroint} holds.

Observe that for any values of $\beta_0$ and $s$ satisfying \eqref{sbeta} we have
\begin{align*}
3\Gamma\left(\frac{1-s}{2}\right) & -
4x^{\beta_0-1}\Gamma\left(\frac{\beta_0-s}{2}\right)
\\ &=-\frac{6}{s-1} \Gamma\left(\frac{3-s}{2}\right) +
\frac{8}{s-\beta_0} e^{-(1-\beta_0)\log x} \Gamma\left(\frac{2+\beta_0-s}{2}\right)
\\ &\geq \left\{\frac{8}{s-1+1-\beta_0} e^{-0.1} - \frac{6}{s-1} \right\}
\Gamma\left(\frac{3-s}{2}\right)
\\ &\geq \left\{\frac{8}{(s-1)1.1e^{0.1}}
- \frac{6}{s-1} \right\} \Gamma\left(\frac{3-s}{2}\right), \notag
\end{align*}
because $\Gamma$ is decreasing around $1$ and thus
$\Gamma\left(\frac{2+\beta_0-s}{2}\right)>\Gamma\left(\frac{3-s}{2}\right)$.
So in all we get from \eqref{closetointegral}
$$
3\Gamma\left(\frac{1-s}{2}\right)-
4x^{\beta_0-1}\Gamma\left(\frac{\beta_0-s}{2}\right)>\frac{1}{s-1}.
$$
Using also $Y_0=\log X_0\leq \log x\leq Y_1\leq (1+d)Y_0$ we obtain
from \eqref{closetointegral} and the above
\begin{equation}\label{sminus1X0}
\frac{1}{s-1}-2 A_6 -4 S_1^{\star\star} -  S_2^{\star\star}
\ll\sum_{\rho\in\DD\setminus\{\rho_0\}} x^{\beta-1}\log x
\leq (1+d)Y_0 \sum_{\rho\in\DD\setminus\{\rho_0\}}
X_0^{\beta-1}.
\end{equation}
Since we have this inequality for all $x\in[X_0,X_1]$, i.e. for
all values of ${\bf u}:=(u_1,\dots,u_m)\in[0,d]^m$, we can
average it on the left hand side with respect to all the $u_k$.
For the general term of the sums $S_j^{\star\star}$ we obtain
\begin{align*}
\int_0^d\int_0^d\dots\int_0^d & \left\{ \Re X_0^{\rho-ij\gamma_0-1}
e^{(u_1+\dots+u_m)(\rho-ij\gamma_0-1)}
\Gamma\left(\frac{\rho-s-ij\gamma_0}{2}\right) \right\}
du_1 \dots du_m \\ & \ll 2^m \frac{1}{\lambda}
\frac{1}{|\rho-ij\gamma_0-1|^m} \leq \left( \frac{2}{\lambda}
\right)^{m+1},
\end{align*}
because $|\rho-ij\gamma_0-1|\geq \lambda$ as $\rho$ lies outside
$D(1+ij\gamma_0,\lambda)$ and $|\Gamma\left(\frac{\rho-s-ij\gamma_0}{2}\right)|
\ll 1/\lambda$.

Taking into account \eqref{numberofterms} and the total volume $d^m$ of the
cube $[0,d]^m$, we obtain from \eqref{sminus1X0} with an absolute constant $C^{\star}$ the inequality
$$
\frac{1}{s-1}-2A_6-  C^{\star} A_9 \log^2 \gamma_0 ~d \left( \frac{2}{d \lambda}
\right)^{m+1}
\leq (1+d)Y_0 \sum_{\rho\in\DD\setminus\{\rho_0\}}
X_0^{\beta-1}.
$$
Now in view of $d:=2e^2/\lambda$, we have
$ \left( \frac{2}{d \lambda} \right)^{m+1} =
\exp(-2(m+1))\leq \exp(-2Y_0)\leq \log^{-2} \gamma_0$, hence writing
in the value of $d$ and putting $C":=2e^2 C^{\star}$ we arrive at
$$
\frac{1}{s-1}-2 A_6 - \frac{C"A_9}{\lambda} \ll \frac{1}{\lambda} Y_0 \sum_{\rho\in\DD\setminus\{\rho_0\}}
X_0^{\beta-1}.
$$
Clearly our best choice here is to choose $s$ as small as
possible. We must meet the conditions \eqref{parameters}, so the
smallest admissible value is $s=1+10(1-\beta_0)$. This choice
yields
$$
\frac{{\lambda}}{Y_0 (1-\beta_0)}-\frac{20 A_6\lambda +10 C" A_9}{Y_0}
\ll \sum_{\rho\in\DD\setminus\{\rho_0\}} e^{Y_0(\beta-1)}.
$$
Applying $\lambda <1$ and using $Y_0\ge A_{10}$ with a sufficiently large value of
the constant $A_{10}$ now gives the asserted inequality
\eqref{eq:Ramachandrastatement} for $Y_0$ in place of $Y$.

The result is proved.
\end{proof}

\section{A preview of further work}\label{sec:preview}

Let us motivate our detailed study of the distribution of zeroes of the Beurling zeta function by recalling three rather sharp, essentially optimal results, known to hold \emph{for the classical case} of natural numbers and primes and the Riemann zeta function.

The first one is the essentially final answer to a classic problem of Littlewood \cite{Littlewood} as to what oscillation could be "caused" by having a $\zeta$-zero $\rho$? For the result and its possible optimality (in particular as regards the somewhat surprising value of the constant $\pi/2$ in it) see \cite{RevAA}.
\begin{theorem}[R\'ev\'esz]\label{th:onezeroosci} Let $\ze(\rho)=0$
with $\rho=\beta+i\gamma$ be a zero of the Riemann zeta function.
Then for arbitrary $\ve>0$ we have for some suitable, arbitrarily large values
of $x$ the lower estimate $|\Delta(x)|\geq
(\pi/2-\ve) \frac{x^{\beta}}{|\rho|}$.
\end{theorem}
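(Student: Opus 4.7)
The plan is a contradiction argument via the Mellin transform of the error term. Starting from
$$\int_1^\infty \Delta(x)\,x^{-s-1}\,dx \;=\; -\frac{1}{s}\frac{\zeta'(s)}{\zeta(s)} \;-\; \frac{1}{s-1} \qquad(\Re s>1),$$
the right-hand side continues meromorphically with a simple pole at $s=\rho$ of residue $-1/\rho$ and, by reality of $\Delta$, a conjugate pole at $\bar\rho$ of residue $-1/\bar\rho$. Setting $g(y):=\Delta(e^y)e^{-\beta y}$, its Laplace transform $G(s):=\int_0^\infty g(y)e^{-sy}dy$ (up to a holomorphic initial-segment piece) coincides with the right-hand side at $s+\beta$, so $G$ has simple poles at $s=\pm i\gamma$ with residues $-1/\rho$ and $-1/\bar\rho$.

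Suppose for contradiction that $|\Delta(x)|\le c\,x^\beta$ for all $x\ge X_0$ with some $c<(\pi/2-\varepsilon)/|\rho|$, equivalently $|g(y)|\le c$ for $y\ge Y_0$. I would test $g$ against the real-valued kernel $\phi_T(y):=\cos(\gamma y+\alpha)\,\chi_{[0,T]}(y)$, where the phase $\alpha$ will be chosen optimally, and evaluate
$$J_T\;:=\;\int_0^T g(y)\phi_T(y)\,dy\;=\;\frac{1}{2\pi i}\int_{(\sigma_0)}G(s)\,\tilde\phi_T(s)\,ds,\qquad \tilde\phi_T(s):=\int_0^T\phi_T(y)e^{sy}\,dy,$$
in two ways. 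Pushing the contour to $\Re s=-\delta$ and picking up the poles at $s=\pm i\gamma$ produces a main term $-\tilde\phi_T(i\gamma)/\rho-\tilde\phi_T(-i\gamma)/\bar\rho$; a direct computation yields $\tilde\phi_T(\pm i\gamma)=\tfrac12 T e^{\mp i\alpha}+O(1/\gamma)$, so the main term equals $-T\,\Re(e^{-i\alpha}/\rho)+O(1/\gamma)$, whose modulus is maximized (over $\alpha$) to $T/|\rho|$ by choosing $\alpha$ so that $e^{-i\alpha}/\rho\in\RR$. On the other hand, the standing bound yields $|J_T|\le c\,\|\phi_T\|_1+O(1)=(2cT/\pi)(1+o(1))$, since $\int_0^T|\cos(\gamma y+\alpha)|\,dy\sim 2T/\pi$. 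Comparing the two evaluations and letting $T\to\infty$ forces $c\ge\pi/(2|\rho|)$, contradicting the assumption; the constant $\pi/2$ here is precisely the reciprocal $1/\mathrm{mean}(|\cos|)=\pi/2$, the sharp $L^1$-to-peak ratio of the optimal oscillating kernel on a half-line.

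The main obstacle is uniform control of the remainder $R_T$ arising in the contour shift: contributions from the other $\zeta$-zeros $\rho'\ne\rho,\bar\rho$ must be absorbed into $o(T)$. This requires both a vertical zero-density estimate (of the type furnished by Lemma \ref{c:zerosinrange}) and a mild mollification of $\phi_T$ (e.g.\ by a Fej\'er factor) so that $\tilde\phi_T$ is spectrally concentrated enough near $\pm i\gamma$ to kill the competing residues at other $\zeta$-zeros while still saturating the $L^1$-to-peak ratio $\pi/2$. The optimality of the constant then reflects the impossibility of improving on this extremal ratio for a real oscillating kernel supported on $[0,\infty)$ -- the one-sidedness being forced by the fact that $g$ is only controlled for $y\ge Y_0$.
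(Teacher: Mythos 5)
First, a point of reference: Theorem~\ref{th:onezeroosci} is not proved in this paper at all --- it is quoted in \S\ref{sec:preview} from \cite{RevAA} as motivation for the programme, so there is no in-paper proof to compare your argument against. Judged on its own terms, your proposal identifies the correct mechanism: the constant $\pi/2$ is indeed the reciprocal of the mean value $2/\pi$ of $|\cos|$, obtained by playing the $L^1$-norm of a real oscillating kernel against its Fourier mass at the frequency $\gamma$, and your bookkeeping of the residues $-1/\rho$, $-1/\bar\rho$ and of $\tilde\phi_T(\pm i\gamma)=\tfrac12 Te^{\mp i\alpha}+O(1/\gamma)$ is correct.

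The gap is exactly where you flag it, and in your chosen route it is the crux, not a technicality: shifting the contour to $\Re s=-\delta$ forces you to continue $G$ across $\Re s=0$, to bound $\zeta'/\zeta$ on a vertical line threading between zeros, and to sum the residues $-\tilde\phi_T(\rho'-\beta)/\rho'$ over \emph{all} zeros $\rho'$ with $\beta'>\beta-\delta$ --- of which there are infinitely many already if $\beta-\delta<1/2$. Saying this ``requires a zero-density estimate and a Fej\'er factor'' is a plan, not a proof, and the clustering of zeros at the same real part near height $\gamma$ needs separate care. The decisive observation is that the whole difficulty is avoidable by keeping the contour in the open right half-plane. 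Under your contradiction hypothesis $|g(y)|\le c$ for $y\ge Y_0$, the Laplace transform $G$ is holomorphic in $\Re s>0$ (which incidentally disposes of the case of a zero with $\beta'>\beta$, since such a zero would be a pole of $G$ there), so one may test against the damped kernel $e^{-\sigma y}\cos(\gamma y+\alpha)$ instead of $\cos(\gamma y+\alpha)\chi_{[0,T]}$ and let $\sigma\to0^+$. This gives
$\tfrac12\bigl|e^{i\alpha}G(\sigma-i\gamma)+e^{-i\alpha}G(\sigma+i\gamma)\bigr|\le c\int_0^\infty e^{-\sigma y}|\cos(\gamma y+\alpha)|\,dy+O(1)=\tfrac{2c}{\pi\sigma}+O(1)$,
while the left-hand side equals $\tfrac{1}{\sigma}\bigl|\Re(e^{-i\alpha}/\rho)\bigr|+O(1)$, because near $s=\pm i\gamma$ the continuation of $G$ differs from its principal part by a locally bounded function; optimizing over $\alpha$ and letting $\sigma\to0^+$ yields $c\ge\pi/(2|\rho|)$ with no reference whatsoever to the other zeros. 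I recommend recasting your argument in this Abelian form; the truncated version can be pushed through for the Riemann zeta function, but only by actually carrying out the density and large-value estimates you have merely named.
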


Denote by $\eta(t):(0,\infty)\to (0,1/2)$ a nonincreasing function and consider the domain
\begin{equation}\label{eq:etazerofree}
\DD(\eta):=\{ s=\sigma+it \in\CC~:~ \sigma>1-\eta(t),~ t>0\}.
\end{equation}
Following Ingham \cite{Ingham} and Pintz \cite{Pintz1,Pintz2}
consider also the derived function (the \emph{Legendre transform} of $\eta$ in logarithmic variables)
\begin{equation}\label{omegadef}
\omega(x):=\omega_{\eta}(x):=
\inf_{y>1} \left(\eta(y)\log x+\log y\right).
\end{equation}

Then the following are sharp results of Pintz, see \cite{Pintz1, Pintz2}, sharp in the ultimate sense that iterative applications of them give back the original estimates without any loss in the constants in the exponents.

\begin{theorem}[Pintz]\label{th:domainesti} Assume that there is no
zero of the Riemann $\zeta$ function in $\DD(\eta)$. Then for arbitrary $\ve>0$
we have $$\Delta(x)=O(x\exp(-(1-\ve)\omega(x)).$$
\end{theorem}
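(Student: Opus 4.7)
The plan is the explicit-formula argument of Ingham in its sharpened Pintz form, exploiting the Legendre duality built into the definition of $\omega$. I will obtain the smoothed version of the bound first and then de-smooth.

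First, I would apply an explicit formula for $\psi$ with a Mellin kernel $\tilde K$ of rapid decay along vertical lines---for example the $\Gamma(w/2)$-kernel used in the proof of Theorem~\ref{th:Ramachandra}, or a Riesz-mean kernel $(1-n/x)_+^k$. Writing
$$
\sum_n \Lambda(n) K\!\left(\frac{n}{x}\right) \;=\; x\,\tilde K(1)\;-\;\sum_\rho x^{\rho}\,\tilde K(\rho)\;+\;(\text{lower order}),
$$
the rapid decay of $\tilde K$ makes the contour shift to the far left negligible and the sum over nontrivial zeros converges absolutely.

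Second, I would insert the zero-free region hypothesis: every nontrivial zero $\rho=\beta+i\gamma$ satisfies $\beta\le 1-\eta(|\gamma|)$. The key algebraic identity is the direct dual of \eqref{omegadef}: since $\omega(x)\le\eta(y)\log x+\log y$ for every $y>1$,
$$
x^{-\eta(y)}\;\le\;y\,e^{-\omega(x)}\qquad(y>1).
$$
Applying this at $y=|\gamma|$ and using the rapid decay of $\tilde K$ together with the classical counting $N(T)\ll T\log T$ to guarantee convergence, one bounds the zero-sum by
$$
\sum_\rho x^{\beta}|\tilde K(\rho)|
\;\le\; x\sum_\rho x^{-\eta(|\gamma|)}|\tilde K(\rho)|
\;\le\; x\,e^{-\omega(x)}\sum_\rho |\gamma|\,|\tilde K(\rho)|
\;\ll\; x\,e^{-\omega(x)}.
$$
Thus the smoothed sum equals $x\tilde K(1)+O(x\,e^{-\omega(x)})$.

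Third, I would convert this smoothed estimate into a pointwise bound on $\Delta(x)=\psi(x)-x$ by a Tauberian comparison: taking $K$ localized on a scale $h=h(x)$ and comparing $\psi(x\pm h)$ with the smoothed average, one arrives at $|\Delta(x)|\ll x\exp(-(1-\varepsilon)\omega(x))$. The free $\varepsilon$ in the exponent is exactly the slack that absorbs the de-smoothing cost, i.e.\ the factor $h$ and polynomial-in-$\log x$ losses arising from Tauberian truncation.

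The main obstacle is precisely this de-smoothing/balancing step. With the unsmoothed truncated Perron formula $\psi(x)-x=-\sum_{|\gamma|\le T}x^\rho/\rho+O(x\log^2(xT)/T)$, balancing the zero-sum contribution $T\log T\cdot x\,e^{-\omega(x)}$ against the truncation error $x\log^2 x/T$ only yields $|\Delta(x)|\ll x\exp(-(1/2-\varepsilon)\omega(x))$, losing half the exponent. Recovering the sharp constant $1-\varepsilon$ is Pintz's actual contribution: either by the smoothed-kernel route above combined with a careful Tauberian conversion, or by an iterative bootstrap feeding the preliminary weaker estimate back into the explicit formula. Either route absorbs the logarithmic and polynomial losses into the free $\varepsilon$ and produces the stated bound; everything else is, in effect, a direct reading of the Legendre transform that defines $\omega$.
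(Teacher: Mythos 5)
First, a point of reference: the paper itself does not prove this statement. Theorem~\ref{th:domainesti} is quoted in Section~\ref{sec:preview} as a known sharp result of Pintz for the Riemann zeta function, cited to \cite{Pintz1,Pintz2} and offered only as motivation for the series; the text even stresses that the original proofs ``relied on particular things which are generally not available for the Beurling zeta functions.'' So there is no in-paper argument to compare yours against, and the proposal must stand on its own.

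On its own it correctly identifies the framework (explicit formula, zero-free region, the Legendre duality $x^{-\eta(y)}\le y\,e^{-\omega(x)}$), but it has a genuine gap exactly where you yourself locate ``Pintz's actual contribution,'' and your second step is in fact inconsistent with your closing discussion. The chain
$$
\sum_\rho x^{\beta}\,|\tilde K(\rho)|\;\le\; x\,e^{-\omega(x)}\sum_\rho|\gamma|\,|\tilde K(\rho)|\;\ll\; x\,e^{-\omega(x)}
$$
requires $\sum_\rho|\gamma|\,|\tilde K(\rho)|\ll 1$, which, given $N(T)\asymp T\log T$, forces $\tilde K(\rho)$ to decay faster than $|\gamma|^{-2}$ along the critical strip; such a kernel $K$ is necessarily smoothed on a scale comparable to $x$ itself, and de-smoothing from that scale costs a factor of order $x$, not something absorbable into $e^{\ve\omega(x)}$. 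Conversely, a kernel localized at the scale $h\asymp x\,e^{-(1-\ve)\omega(x)}$ that your Tauberian step needs has $\tilde K(\rho)$ essentially of size $1/|\rho|$ up to $|\gamma|\asymp x/h=e^{(1-\ve)\omega(x)}$, so the weighted sum $\sum_\rho|\gamma|\,|\tilde K(\rho)|$ is roughly $e^{2(1-\ve)\omega(x)}\,\omega(x)$ and the displayed bound fails badly. This is precisely the tension that produces the exponent $\tfrac12(1-\ve)\omega(x)$ you compute in your final paragraph; invoking ``a careful Tauberian conversion or an iterative bootstrap'' names the obstacle without overcoming it (note that a naive bootstrap does not help here: the loss comes from summing the trivial bound over $\asymp T\log T$ zeros, not from an a priori bound on $\Delta$ that could be improved by iteration). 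Removing that factor $\tfrac12$ is the entire content of Pintz's theorem, and it requires a genuinely different organization of the zero sum than the crude triangle inequality used here. As written, the proposal establishes at best the older Ingham-type estimate $\Delta(x)\ll x\exp(-\tfrac12(1-\ve)\omega(x))$, not the stated one.
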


\begin{theorem}[Pintz]\label{th:domainosci}
Conversely, assuming that there are infinitely many zeroes within
the domain \eqref{eq:etazerofree}, we have for any $\ve >0$
the oscillation estimate $\Delta(x)=\Omega(x\exp(-(1+\ve)\omega(x))$.
\end{theorem}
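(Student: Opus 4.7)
I would approach Theorem \ref{th:domainosci} by a Tur\'an-type power-sum argument applied to the truncated Riemann--von Mangoldt formula for $\Delta$, exploiting the hypothesis of infinitely many zeros in $\DD(\eta)$ to supply, at every scale $x$, a single $\zeta$-zero whose contribution to that formula is of the prescribed oscillation order. The plan is to combine (i) a careful choice of a \emph{single} optimal zero $\rho$ aligned with the Legendre minimizer in the definition of $\omega(x)$, (ii) the Riemann--von Mangoldt expansion of $\Delta$ (Lemma~\ref{l:vonMangoldt}), and (iii) a Tur\'an second-main-theorem lower bound for the resulting trigonometric sum on a short log-window.

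First I would fix a large $x$ and let $y^{*}=y^{*}(x)$ denote the minimizer in the defining infimum $\omega(x)=\inf_{y}(\eta(y)\log x+\log y)$. By hypothesis, I can choose a zero $\rho=\beta+i\gamma\in\DD(\eta)$ with $\gamma$ in a dyadic window around $y^{*}$. The relation $\beta>1-\eta(\gamma)$, together with continuity and monotonicity of $\eta$, forces
$$\frac{x^{\beta}}{|\rho|}\ \geq\ x\,\exp\bigl(-\eta(\gamma)\log x-\log\gamma-O(1)\bigr)\ \geq\ x\,\exp\bigl(-(1+\tfrac{\varepsilon}{2})\omega(x)\bigr),$$
so that the contribution of this single zero to the truncated explicit formula is already at the target order of magnitude. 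Next I would apply Tur\'an's second main theorem (the power-sum method) to the trigonometric sum $\sum_{\rho}e^{\rho u}/\rho$ arising from the Riemann--von Mangoldt formula, on a short log-interval $[u,u+H]$ with $u=\log x$ and $H$ a small negative power of $\log x$. This guarantees that the absolute value of the whole sum attains a value comparable to its dominant single term at some $u'\in[u,u+H]$, which together with the above individual bound yields $|\Delta(e^{u'})|\gtrsim e^{u'}\exp(-(1+\varepsilon)\omega(e^{u'}))$. Letting $x\to\infty$ along a sequence of scales at which a well-aligned $\rho$ exists delivers the $\Omega$-estimate.

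The main obstacle, as is typical for the Tur\'an method, is the bookkeeping of the ``competing'' zeros and the error term of the truncated explicit formula: the contribution of the chosen $\rho$ must survive cancellation against many other zeros at the same height. The crucial input is a zero-density bound of the flavour of Theorem~\ref{th:density} together with Lemma~\ref{c:zerosinrange}, bounding the number $n$ of zeros in the relevant window, so that the Tur\'an constant $(H/(ne))^{n}$ does not destroy the lower bound. The $(1+\varepsilon)$ slack granted by the hypothesis is precisely what absorbs this Tur\'an factor, the truncation error from Lemma~\ref{l:vonMangoldt}, and the discrepancy between $\gamma$ and the exact minimizer $y^{*}$. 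The assumption of \emph{infinitely many} zeros in $\DD(\eta)$ is what makes $\omega$ the sharp rate: it ensures that at every scale $x$ some $\rho$ can be found with $\gamma\approx y^{*}(x)$, whereas with only finitely many zeros one could slightly enlarge the zero-free region and apply Theorem~\ref{th:domainesti} to obtain a stronger upper bound with a smaller $\omega$.
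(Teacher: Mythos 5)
First, a point of order: the paper does not prove Theorem \ref{th:domainosci}. It is quoted, with attribution to Pintz \cite{Pintz1,Pintz2}, in the closing preview section \ref{sec:preview} as a known sharp result for the classical Riemann zeta function whose extension to the Beurling setting is announced as the long-range goal of the series; the author explicitly notes that its original proof uses tools not yet available here. So there is no proof in the paper to compare yours against, and I can only assess your proposal on its own terms. There it has two concrete gaps.

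The first gap is the alignment step. From ``infinitely many zeros in $\DD(\eta)$'' you conclude that for every large $x$ there is a zero with $\gamma$ in a dyadic window around the minimizer $y^{*}(x)$; the hypothesis gives no such control --- the ordinates $\gamma_k$ may be arbitrarily sparse and may systematically avoid the near-minimizers of $y\mapsto\eta(y)\log x+\log y$. Relatedly, the second inequality in your displayed chain asserts $\eta(\gamma)\log x+\log\gamma\le(1+\tfrac{\ve}{2})\,\omega(x)+O(1)$, but by the definition \eqref{omegadef} of $\omega$ as an infimum the automatic inequality is the \emph{opposite} one, $\eta(\gamma)\log x+\log\gamma\ge\omega(x)$; what you need holds only when $\gamma$ happens to be a near-minimizer for that particular $x$. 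The argument must therefore be run in the other direction: for each given zero $\rho_k$ one locates (via the concavity/Legendre structure of $\omega$ and regularity of $\eta$) the range of $x$ for which $\gamma_k$ nearly attains the infimum, and produces oscillation at such $x$ --- this is where the substance of Pintz's proof lies. The second gap is quantitative: Tur\'an's second main theorem on a window containing $n$ zeros costs a factor of the shape $(H/(ne))^{n}$. Near height $\gamma$ one has $n\asymp\log\gamma$ (cf.\ Lemma \ref{c:zerosinrange}), so with $H$ a negative power of $\log x$ this factor is $\exp(-c\log\gamma\cdot\log\log x)$, whereas at the relevant $x$ (where $\eta(\gamma)\log x\asymp\log\gamma$) one only has $\omega(x)\asymp\log\gamma$, so the available slack $\ve\,\omega(x)$ does \emph{not} absorb the loss. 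This is precisely why the power-sum route historically yields only $\Omega(x\exp(-c\,\omega(x)))$ with some $c>1$, and why the sharp constant $1+\ve$ requires the single-zero kernel method --- essentially the mechanism behind Theorem \ref{th:onezeroosci}, which extracts $|\Delta(x)|\gg x^{\beta}/|\rho|$ from one zero with no $n$-dependent penalty. Replacing your step (iii) by an appeal to such a one-zero oscillation theorem, and repairing step (i) as indicated, is the viable route.
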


These results, in their original proofs relied on particular things which are generally not available for the Beurling zeta functions. Therefore, it is unclear how much of these precise relations can as well be stated for the distribution of Beurling primes? Our aim with the present series is to prove analogously sharp results for the Beurling case eventually. The above cited theorems are much sharper than everything currently known for the Beurling case--so that extending them would indeed mean a considerable advance. That is our long range aim with this series.

\end{document}